\numberwithin{equation}{section}
\newtheorem{theorem}{Theorem}[section]
\newtheorem{lemma}[theorem]{Lemma}
\theoremstyle{definition}
\newtheorem{remark}[theorem]{Remark}
\newtheorem*{theorem*}{Theorem}
\newtheorem*{thma}{Theorem A}
\newtheorem*{thmb}{Theorem B}
\begin{document}
\title{The Spinorial Energy Functional: Solutions of the Gradient Flow on Berger Spheres}
\author{Johannes Wittmann}
\maketitle

\begin{abstract}
We study the negative gradient flow of the spinorial energy functional (introduced by Ammann, Weiß, and Witt) on 3-dimensional Berger spheres. For a certain class of spinors we show that the Berger spheres collapse to a 2-dimensional sphere. Moreover, for special cases, we prove that the volume-normalized standard 3-sphere together with a Killing spinor is a stable critical point of the volume-normalized version of the flow. Our results also include an example of a critical point of the volume-normalized flow on the 3-sphere, which is not a Killing spinor.
\end{abstract}
\section{Introduction}
Let $M$ be a compact spin manifold and $\mathcal{N}$ the union of all pairs $(g,\varphi)$ where $g$ is a Riemannian metric on $M$ and $\varphi\in\Gamma(\Sigma(M,g))$ is a spinor of the spin manifold $(M,g)$ whose pointwise norm is constant and equal to $1$. The \textit{spinorial energy functional} $\mathcal{E}$, introduced in \cite{AWWsflow}, is defined by
\[\mathcal{E}\colon \mathcal{N}\rightarrow [0,\infty), \hspace{3em}(g,\varphi)\mapsto \frac{1}{2}\int_M|\nabla^{\Sigma(M,g)}\varphi|^2dv^g,\]
where $dv^g$ is the Riemannian volume form of $(M,g)$ and $|.|$ is the pointwise norm on $T^*M\otimes \Sigma(M,g)$. If $\textup{dim}M\ge 3$, then the critical points of $\mathcal{E}$ are precisely the pairs $(g,\varphi)$ consisting of a Ricci-flat Riemannian metric $g$ and a parallel spinor $\varphi$. In the surface case, the spinorial energy functional is related to the Willmore energy of immersions and treated in detail in \cite{AWWsflowsurf}.

On the Fréchet-bundle $\mathcal{N}\rightarrow\mathcal M$, $\mathcal{M}:=\{\text{Riemannian metrics on } M\}$, there exists a natural connection, which is defined in \cite{AWWsflow} with the aid of results in \cite{BourGaud}. This connection defines a splitting of $T\mathcal{N}$ in horizontal and vertical subbundles, which allows us to define a Riemannian metric on $\mathcal{N}$. The negative gradient flow of $\mathcal{E}$ with respect to this Riemannian metric is called the \textit{spinor flow}. Short time existence and uniqueness of the spinor flow was shown in \cite{AWWsflow} with a variant of DeTurck's trick.

In this paper the spinor flow on 3-dimensional Berger spheres is treated. We view the 3-sphere $S^3$ as a $S^1$-principal bundle over $S^2$ via the Hopf fibration $\pi\colon S^3\rightarrow S^2$. Rescaling the standard metric $g_{S^3}$ along the fibers of the Hopf fibration by $\varepsilon >0$ yields the Berger metrics $g^\varepsilon$ on $S^3$. We call $(S^3,g^\varepsilon)$ a Berger sphere.

There is a certain class of spinors on $S^3$, the so-called $S^1$-invariant spinors \cite{ABCollaps}, \cite{Mor}, which are in one-to-one correspondence to the spinors on the base manifold $S^2$. Our first theorem concerns these spinors.

\begin{thma}[Collapse] Let $M=S^3$ and as initial value $(g_0,\varphi_0)$ choose $g_0=g^\varepsilon$ and $\varphi_0$ a spinor of unit length that corresponds to an arbitrary Killing spinor on the base $S^2$. Then, if the fibers are sufficiently short (i.e. $\varepsilon$ is small enough), the spinor flow converges to a 2-dimensional sphere in infinite time. 
\end{thma}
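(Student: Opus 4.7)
The plan is to exploit the large isometry group of the initial data to reduce the spinor flow PDE to a low-dimensional evolution equation in which the fiber size $\varepsilon$ can be tracked explicitly. First I would observe that the spinor flow is $\operatorname{Diff}(M)$-equivariant, so by short-time uniqueness any symmetry of $(g_0,\varphi_0)$ is preserved along the flow. The pair $(g^\varepsilon,\varphi_0)$ is invariant under the Hopf $S^1$-action and, up to a phase rotation of the spinor, under a larger subgroup of the isometries of $(S^3,g^\varepsilon)$. Pushing $\varphi(t)$ down via the Hopf projection identifies it with an evolving unit spinor on $S^2$, while the preserved symmetry confines $g(t)$ to the one-parameter family of Berger metrics $g^{\varepsilon(t)}$. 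The same symmetry should force the push-forward of $\varphi(t)$ to remain in the finite-dimensional space of Killing spinors on $S^2$, up to a time-dependent phase, reducing the flow to an ODE system in $\varepsilon(t)$ and this phase.

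Next I would compute the energy on the reduced configuration space. Using O'Neill-type formulas relating the spin connection of $(S^3,g^\varepsilon)$ to that of $(S^2,g_{S^2})$ together with the Hopf connection, combined with the Killing equation on $S^2$, should yield an explicit formula $\mathcal{E}(g^\varepsilon,\varphi_\varepsilon)=F(\varepsilon)$. Computing the induced Riemannian metric on the reduced space from the natural metric on $\mathcal{N}$ then gives a scalar ODE $\dot\varepsilon=-G(\varepsilon)$, and a direct calculation should show $F(\varepsilon)\searrow\inf F$ as $\varepsilon\to 0^+$, so that $G(\varepsilon)>0$ for all sufficiently small $\varepsilon>0$.

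It then remains to establish long-time existence and asymptotics. Monotonicity of $\mathcal{E}$ along the flow keeps $\varepsilon(t)$ bounded above by $\varepsilon_0$; smoothness of $G$ on $(0,\infty)$ rules out finite-time blowup; and $\varepsilon=0$ cannot be reached in finite time, since the Berger metrics degenerate there. Strict decrease then forces $\varepsilon(t)\to 0$ as $t\to\infty$, which is the collapse to a 2-sphere claimed in the theorem: in the Gromov--Hausdorff sense, $(S^3,g^{\varepsilon(t)})$ converges to $(S^2,\tfrac14 g_{S^2})$.

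The main obstacle I anticipate is justifying the reduction step rigorously: one must verify that the $S^1$-invariance (together with the residual symmetries) really cuts the flow down to an ODE and, in particular, that the spinor component does not develop a symmetric but non-Killing deformation on $S^2$. Quantitatively, the hypothesis that $\varepsilon_0$ be small likely enters because $G$ may change sign at some threshold $\varepsilon_\ast$, or because the coercivity of the reduced metric on $\mathcal{N}$ degenerates there; pinning this threshold down will require a detailed analysis of the $\varepsilon$-dependent connection terms in $|\nabla^\Sigma\varphi_\varepsilon|^2$.
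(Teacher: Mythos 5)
Your reduction step contains a genuine gap that breaks the rest of the argument. Symmetry does \emph{not} confine the evolving metric to the one-parameter family $g^{\varepsilon(t)}$ in which only the fiber length changes: the metrics invariant under the relevant isometries form a \emph{two}-parameter family, obtained by scaling the fiber direction by $\alpha(t)$ and the horizontal (base) direction by $\beta(t)$ independently, and in the actual solution the base scale does move ($\dot\beta\neq 0$). This is not a technicality: the content of the collapse statement is precisely that $\alpha(t)\to 0$ while $\beta(t)$ stabilizes at some $\beta_\infty>0$ (so the limit sphere is $\beta_\infty^2 g_{FS}$, not $\tfrac14 g_{S^2}$ as you assert), and moreover for fibers that are too long (and one sign of the Killing constant relative to the orientation) \emph{both} $\alpha$ and $\beta$ tend to $0$, i.e.\ the whole $S^3$ shrinks to a point. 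A scalar ODE $\dot\varepsilon=-G(\varepsilon)$ cannot reproduce this dichotomy; the threshold in $\varepsilon$ does not come from a sign change of a one-dimensional gradient but from which basin of a planar dynamical system the initial point lies in. The paper therefore works with the two-function ansatz, computes the gradient components $Q_1,Q_2$ explicitly, obtains a nonlinear ODE system for $(\alpha,\beta)$, and analyzes it via invariant compact regions together with the Poincar\'e--Bendixson theorem; long-time existence and the asymptotics $\alpha\to0$, $\beta\to\beta_\infty>0$ come from that phase-plane analysis, not from monotonicity of the energy in a single parameter.

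A second gap is the spinor part. $S^1$-invariance does not force the pushed-down spinor to stay in the finite-dimensional space of Killing spinors: the $S^1$-invariant spinors on the Berger sphere correspond to \emph{all} spinors on $S^2$, an infinite-dimensional space, so ``symmetric but non-Killing deformations'' are exactly what symmetry alone cannot exclude — you flag this worry yourself, but it is the crux, not a loose end. The paper resolves it constructively: the spinor ansatz is parallel transport in the generalized cylinder $(I\times S^3,\,dt^2+g_t)$, a curvature computation in the cylinder yields $\nabla^t_K\varphi_t$ and $\nabla^t_Y\varphi_t$ as explicit multiples of $K\cdot_t\varphi_t$ and $Y\cdot_t\varphi_t$, from which $Q_2(g_t,\varphi_t)=0=\partial_t\varphi_t$ follows, and only afterwards is diffeomorphism invariance plus uniqueness of the flow used to conclude that $\varphi_t$ remains $S^1$-invariant and corresponds to a $\frac{\lambda}{\beta(t)}$-Killing spinor on the shrinking base. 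If you want to salvage your scheme, you would have to (i) enlarge the reduced configuration space to the pairs $(\alpha,\beta)$ and (ii) replace the appeal to symmetry for the spinor by an argument (such as the cylinder parallel-transport identity) showing that the Killing condition is propagated; at that point you have essentially reconstructed the paper's proof.
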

This theorem can be seen as a special case of the conjecture that $S^1$-principal bundles with suitable Riemannian metrics and sufficiently short fibers together with $S^1$-invariant spinors collapse to the base manifold under the spinor flow.

In \cite{AWWsflow} it was observed that the volume-normalized standard metric on $S^3$ together with a Killing spinor is a critical point of the volume-normalized spinor flow. It is not clear whether this critical point is stable. However, there are such stability results for other geometric flows, see e.g. \cite[1.1 Theorem]{Huisken} in the case of the mean curvature flow. Our second theorem is a first positive result concerning this stability question.
 
\begin{thmb}[Stability] Let $M=S^3$ and as initial value $(g_0,\varphi_0)$ choose $g_0=c(\varepsilon)g^\varepsilon$ the volume-normalized Berger metric and $\varphi_0$ a spinor that is obtained via parallel transport of an arbitrary Killing spinor of unit length from $(S^3,g_{S^3})$ to $(S^3,g^\varepsilon)$ as described in Remark \ref{rmk main res 2}. Then, if we are not too far away from $c(1)g_{S^3}$ (i.e. $\varepsilon$ is sufficiently close to $1$), the volume-normalized spinor flow converges in infinite time to the volume-normalized standard metric on $S^3$ together with a Killing spinor.
\end{thmb}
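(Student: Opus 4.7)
The plan is to exploit the large symmetry group of Berger spheres to reduce the (volume-normalized) spinor flow to a finite-dimensional gradient ODE, and then to prove linear stability at the critical point $\varepsilon=1$. First I would set up the one-parameter ansatz $(g,\varphi) = (c(\varepsilon)g^\varepsilon,\varphi^\varepsilon)$, where $\varphi^\varepsilon$ is the parallel transport of a fixed unit Killing spinor along the path $\varepsilon'\mapsto g^{\varepsilon'}$ with respect to the Bourguignon--Gauduchon connection used in \cite{AWWsflow}. Using the $SU(2)$-invariance of both members of the ansatz (inherited from the Hopf fibration), one expects the negative gradient of $\mathcal{E}$ at such a point to stay tangent to this one-parameter family: the horizontal component is an $SU(2)$-invariant symmetric $2$-tensor compatible with the Berger splitting, and the vertical component is an $SU(2)$-invariant variation of the spinor, which, by the parallel-transport definition of $\varphi^\varepsilon$, reduces to the one-dimensional family generated by $\partial_\varepsilon\varphi^\varepsilon$. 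Verifying that the flow respects this ansatz (possibly after an appropriate DeTurck-type diffeomorphism gauge) reduces the PDE to a single scalar ODE $\dot\varepsilon = F(\varepsilon)$.

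Second, I would identify $\varepsilon=1$ as a zero of $F$: the observation in \cite{AWWsflow} recalled in the introduction says that $(c(1)g_{S^3},\varphi^1)$ is a critical point of the volume-normalized spinorial energy, so $F(1)=0$. Third, local stability at $\varepsilon=1$ follows once we show $F'(1)<0$. Concretely, I would compute the function $\varepsilon\mapsto \mathcal{E}(c(\varepsilon)g^\varepsilon,\varphi^\varepsilon)$ explicitly via the standard formulas for the spinorial covariant derivative on the Berger sphere in an $SU(2)$-invariant frame, verify that $\varepsilon=1$ is a non-degenerate minimum along the ansatz, and read off $F'(1)<0$ from the resulting Hessian together with the metric induced on $\mathcal{N}$ by the Ammann--Weiß--Witt connection.

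Once $F(1)=0$ and $F'(1)<0$ are in hand, standard ODE theory (linearization plus Gr\"onwall) gives a neighbourhood of $1$ from which $\varepsilon(t)\to 1$ exponentially as $t\to\infty$. Translating back, $(g(t),\varphi(t))$ converges to $(c(1)g_{S^3},\varphi^1)$, which is a Killing-spinor pair, proving the theorem.

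The main obstacle I expect is the first step: rigorously justifying that the (volume-normalized) spinor flow preserves the ansatz. One must (a) show that the parallel-transport prescription for $\varphi^\varepsilon$ is compatible with the vertical part of the gradient, and (b) handle the diffeomorphism gauge inherent to DeTurck's trick in a way that does not break $SU(2)$-equivariance. Once the reduction to an ODE is established, the stability computation is a finite, explicit calculation, though tracking the normalization factor $c(\varepsilon)$ and the signs contributed by the volume constraint requires care.
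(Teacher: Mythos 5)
Your overall strategy (reduce the normalized flow by symmetry to a finite-dimensional ODE along the normalized Berger family, identify $\varepsilon=1$ as a zero of the reduced vector field, and prove local stability by a sign/Hessian computation) is parallel to what the paper actually does, and for Theorem B as stated a local argument would indeed suffice. But the step you yourself flag as the main obstacle is a genuine gap, and the mechanism you propose for closing it does not work as stated. Equivariance of $-\textup{grad}\,\mathcal{E}$ only tells you that $Q_1$ and $Q_2$ at an invariant pair are again invariant objects. For the metric part this is enough to stay in the \emph{two}-parameter Berger family \eqref{eq ansatz for g_t} (and you then still need the volume-preservation computation of Section \ref{sect sflow} to cut down to the normalized one-parameter slice $c(\varepsilon)g^\varepsilon$ — the gradient is not tangent to that slice for the unnormalized flow). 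For the spinor part, however, invariance is far too weak: on $S^3$ the space of spinors invariant under the relevant group is of positive dimension (already the left-invariant spinors form a copy of $\Sigma_3\cong\mathbb{C}^2$), so an invariant $Q_2$ orthogonal to $\varphi$ need not vanish and need not be tangent to your one-parameter spinor family, whose vertical velocity is zero by the parallel-transport definition. If $Q_2\neq 0$ the solution leaves your ansatz and the scalar ODE reduction collapses. In the paper this is resolved not by symmetry but by computation: the generalized-cylinder/curvature ODE argument gives the explicit formulas for $\nabla^t_K\varphi_t$ and $\nabla^t_Y\varphi_t$ (Lemma \ref{lemma eq for varphi_t 2}), from which $Q_2(g_t,\varphi_t)=0$ and the explicit $Q_1$, $\tilde Q_1$ follow (Lemma \ref{lemma q-terms 2}). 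The same computation is also hidden in your third step: $\varphi^\varepsilon$ is \emph{not} a Killing or $S^1$-invariant spinor for $g^\varepsilon$, so "standard formulas on the Berger sphere" do not give you $\mathcal{E}(c(\varepsilon)g^\varepsilon,\varphi^\varepsilon)$; you need exactly the transported-spinor derivative formula above. Also note that no DeTurck gauge is involved: the explicit ansatz solves the genuine (normalized) gradient flow, the gauge only enters the general short-time existence theory.

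Two further points of comparison. The paper does not restrict a priori to the normalized curve; it derives the $2\times 2$ ODE system for $(\alpha,\beta)$ and then checks the tangency identity $F(u(t))=k(t)u'(t)$ along $u(t)=(\sqrt{c(t)}\,t,\sqrt{c(t)})$, which both legitimizes the one-dimensional reduction and yields the explicit factor $k(t)$; its root structure ($t=1$ and $t=\frac23$ for $a\mu=1$) gives the precise basin of attraction $\varepsilon>\frac23$ of Theorem \ref{main result 2}, strictly more than the local statement your linearization $F'(1)<0$ would provide. Finally, be careful with the claim that the limiting spinor is the original $\sigma_0$: parallel transport in the space of metrics is path dependent, and the solution transports along the flow path, not along your $\varepsilon$-path; the paper only asserts (and only needs) that the limit is \emph{a} Killing spinor, read off from the convergence $f,g\to\mu/\sqrt{c(1)}$ in the formulas of Lemma \ref{lemma eq for varphi_t 2}.
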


\subsection{Overview of the proof}
First of all, in \cite{AWWsflow} it was shown that under the splitting of $T\mathcal{N}$ the negative gradient of $\mathcal{E}$ has an expression
\[-\textup{grad}\mathcal{E}_{(g,\varphi)}=(Q_1(g,\varphi),Q_2(g,\varphi))\in\Gamma(\odot^2T^*M)\oplus\Gamma(\Sigma(M,g)),\]
where $Q_1(g,\varphi)$ and $Q_2(g,\varphi)$ depend mainly on $\nabla^{(M,g)}\varphi$. This fact is important for us, because it means, essentially, that we can understand $-\textup{grad}\mathcal{E}_{(g,\varphi)}$ by understanding $\nabla^{(M,g)}\varphi$.

Furthermore, one of the main tools for us to prove the above theorems are generalized cylinders \cite{BaerGaudMoroi}, which provide a way to identify spinors for different metrics. To be more concrete, given a smooth $1$-parameter family $(g_t)_{t\in I}$ of Riemannian metrics on a manifold $M$, $I$ an interval, the generalized cylinder is the manifold $\mathcal{Z}:=I\times M$ together with the Riemannian metric $g_\mathcal{Z}:=dt^2+g_t$. If the dimension of $M$ is odd, as in our case, we get an identification $\Sigma^+(\mathcal Z,g_\mathcal{Z})|_{\{t\}\times M}\cong\Sigma (M,g_t)$. In particular, we can think of sections $\varphi\in\Gamma(\Sigma^+(\mathcal Z,g_\mathcal{Z}))$ as families of sections $(\varphi_t)_{t\in I}$ with $\varphi_t\in\Gamma(\Sigma(M,g_t))$ where $\varphi_t(.):=\varphi(t,.)$.
 
Denote by $\pi\colon S^3\rightarrow S^2$ the Hopf fibration as above. We write
\begin{align}\label{eq ansatz for g_t}
g_t(X_1+Y_1,X_2+Y_2):=g_{S^3}(\alpha(t)X_1+\beta(t)Y_1, \alpha(t)X_2+\beta(t)Y_2)
\end{align}
for $t\in I=[0,b)$, $b\in(0,\infty]$, $X_i\in \textup{ker}(d\pi)$, $Y_i\in\textup{ker}(d\pi)^\bot$, $i=1,2$, and smooth functions $\alpha,$ $\beta\colon I\rightarrow (0,\infty)$. We will choose \eqref{eq ansatz for g_t} as ansatz for the metric part of the solution where we require that $\alpha(0)$ and $\beta(0)$ are chosen so that $g_0$ is the metric part of our initial value.

Using the generalized cylinder with respect to \eqref{eq ansatz for g_t} we write
\begin{align}\label{eq ansatz for varphi_t}
\varphi_t(p):=\mathcal{P}_{0,t}(p)(\varphi_0(p))
\end{align}
where $\mathcal{P}_{0,t}(p)$ is the parallel transport in $\Sigma^+(\mathcal{Z},g_\mathcal{Z})$ with respect to $\nabla^{\Sigma^+(\mathcal{Z},g_{\mathcal{Z}})}$ along the curve $\gamma_p(s):=(s,p)$ from $\gamma_p(0)$ to $\gamma_p(t)$. Then we choose \eqref{eq ansatz for varphi_t} as ansatz for the spinor part of the solution. In the next step, we derive an expression for $\nabla^{\Sigma(S^3,g_t)}\varphi_t$ that depends in particular on $\alpha$ and $\beta$. To achieve this, we use curvature terms to construct suitable differential equations in $\Sigma^+(\mathcal{Z},g_\mathcal{Z})$. We use these expressions for $\nabla^{\Sigma(S^3,g_t)}\varphi_t$ to calculate $Q_1(g_t,\varphi_t)$ and $Q_2(g_t,\varphi_t)$. After that we show $\frac{\partial}{\partial t}\varphi_t=0=Q_2(g_t,\varphi_t)$ independent of the choice of $\alpha$ and $\beta$. Finally, we will see that $\frac{\partial}{\partial t}g_t=Q_1(g_t,\varphi_t)$ is equivalent to a system of two non-linear ordinary differential equations for $\alpha$ and $\beta$. We solve these systems to get the desired properties of the solutions.

\subsection*{Acknowledgments} I would like to thank Bernd Ammann for his ongoing support and many fruitful discussions. I am also grateful to Nicolas Ginoux for his insightful comments at the early stages of my research.

\section{Preliminaries}
\subsection{Spin geometry} In this section we fix notation and review basics of spin geometry which will be relevant in the following. For more details we refer to e.g. \cite{LaMi}, \cite{Hij}, \cite{Fri} and \cite{Roe}.

Let $M$ be an oriented $n$-dimensional manifold and denote by $\textup{GL}^+M$ the $GL^+(n,\mathbb{R})$-principal bundle of oriented frames for $M$. Moreover, we denote by $\theta\colon \widetilde{GL}^+(n,\mathbb{R})\rightarrow GL^+(n,\mathbb{R})$ the universal covering for $n\ge 3$ and the connected twofold covering for $n=2$. A \textit{topological spin structure on $M$} is a $\theta$-reduction of $\textup{GL}^+M$, i.e. a topological spin structure on $M$ is a $\widetilde{GL}^+(n,\mathbb{R})$-principal bundle $\widetilde{\textup{GL}}^+M$ over $M$ together with a twofold covering $\Theta\colon\widetilde{\textup{GL}}^+M\rightarrow\textup{GL}^+M$ such that the following diagram commutes
\[
\begin{xy}
\xymatrix{
	\widetilde{\textup{GL}}^+M\times \widetilde{GL}^+(n,\mathbb{R}) \ar[r] \ar[dd]^{\Theta\times\theta}  &   \widetilde{\textup{GL}}^+M \ar[dd]^\Theta  \ar[rd] & \\
	& & M\\
	\textup{GL}^+M\times GL^+(n,\mathbb{R}) \ar[r]             &   \textup{GL}^+M \ar[ru] & 
}
\end{xy}
\]
where the horizontal arrows denote the group actions of the principal bundles. Now let $(M,g)$ be an oriented Riemannian manifold and $\textup{SO}(M,g)$ the $SO(n,\mathbb{R})$-principal bundle of oriented orthonormal frames for $M$. Restricting $\theta$ to the \textit{spin group} given by $\textup{Spin}(n):=\theta^{-1}(SO(n,\mathbb{R}))$, we define a \textit{metric spin structure on $M$} to be a $\theta|_{\textup{Spin}(n)}$-reduction of $\textup{SO}(M,g)$. Again, this means that a metric spin structure on $M$ is a $\textup{Spin}(n)$-principal bundle $\textup{Spin}(M,g)$ over $M$ together with a twofold covering $\Theta\colon \textup{Spin}(M,g)\rightarrow \textup{SO}(M,g)$ such that the following diagram commutes
\[
\begin{xy}
\xymatrix{
	\textup{Spin}(M,g)\times \textup{Spin}(n) \ar[r] \ar[dd]^{\Theta\times\theta}  &   \textup{Spin}(M,g) \ar[dd]^\Theta  \ar[rd] & \\
	& & M\\
	\textup{SO}(M,g)\times SO(n,\mathbb{R}) \ar[r]             &   \textup{SO}(M,g) \ar[ru] & 
}
\end{xy}
\]
Given a topological spin structure $\widetilde{\textup{GL}}^+M$ on an oriented manifold $M$, every Riemannian metric $g$ on $M$ defines a metric spin structure on $(M,g)$ by $\textup{Spin}(M,g):=\widetilde{\textup{GL}}^+M|_{\textup{SO}(M,g)}$. In the following, the term \textit{spin structure} refers to a topological or metric spin structure and it should always be clear from the context which one we mean. 

In order to introduce (complex) spinors, we consider representations of $\textup{Spin}(n)$. We first note that the spin group can be realized as a subgroup of the group of invertible elements in $\mathbb{C}l_n$ where $\mathbb{C}l_n$ is the Clifford algebra of $\mathbb{C}^n$ with inner product given by the complex bilinear extension of the standard inner product of $\mathbb{R}^n$, namely $\textup{Spin}(n)\cong\{x_1\cdot\ldots\cdot x_{2k}\text{ }|\text{ } x_i\in S^{n-1}\subset\mathbb{R}^n\subset\mathbb{C}l_n,\text{ }k\in\mathbb{N}\}$. If $n$ is even, then there exists exactly one equivalence class of irreducible complex representations of $\mathbb{C}l_n$ and every such representation is of dimension $2^\frac{n}{2}$. If $n$ is odd, then there exist exactly two equivalence classes of irreducible complex representations of $\mathbb{C}l_n$, each of dimension $2^{\frac{n-1}{2}}$. Introducing the \textit{complex volume element} $\omega_n:=i^{\lfloor\frac{n+1}{2}\rfloor} e_1\cdot\ldots\cdot e_n\in\mathbb{C}l_n$ where $a=\lfloor b\rfloor$ is the largest integer $a\le b$ and $(e_1,\ldots,e_n)$ is the standard basis of $\mathbb{C}^n$, we can distinguish the two different equivalence classes for $n$ odd by the action of $\omega_n$, i.e. $\omega_n$ acts as the identity $id$ on one equivalence class and as $-id$ on the other. The \textit{complex spinor representation} $\rho\colon \textup{Spin}(n)\rightarrow \textup{Aut}(\Sigma_n)$ is the restriction of an irreducible complex representation $\rho\colon \mathbb{C}l_n\rightarrow \textup{End}(\Sigma_n)$ of $\mathbb{C}l_n$ to $\textup{Spin}(n)$ where for $n$ odd we require $\rho(\omega_n)=id_{\Sigma_n}$. For $n$ odd, the complex spinor representation is irreducible. For $n$ even, it splits into two irreducible representations $\rho=\rho^+\oplus\rho^-$ where $\rho^\pm\colon \textup{Spin}(n)\rightarrow \textup{Aut}(\Sigma_n^\pm)$ have dimension $2^{\frac{n}{2}-1}$ and $\Sigma_n^\pm$ are the $\pm1$-eigenspaces of $\rho(\omega_n)$.

Let $\textup{Spin}(M,g)$ be a spin structure on $(M,g)$. The \textit{(complex) spinor bundle} $\Sigma (M,g)$ is the complex vector bundle associated to the spin structure and the  complex spinor representation, i.e. $\Sigma(M,g):=\textup{Spin}(M,g)\times_\rho \Sigma_n$. For $n$ even, we have an isomorphism $\Sigma (M,g)\cong \Sigma^+ (M,g) \oplus\Sigma^-(M,g)$ where $\Sigma^\pm(M,g):=\textup{Spin}(M,g)\times_{\rho^\pm}\Sigma_n^\pm$. Next we introduce the so-called Clifford multiplication, which allows to multiply spinors and tangent vectors. To that end, notice that $TM\cong \textup{Spin}(M,g)\times_{\tau\circ\theta|_{\textup{Spin(n)}}}\mathbb{R}^n$, where $\tau$ is the standard representation of $SO(n,\mathbb{R})$ on $\mathbb{R}^n$. Given $\varphi=[p,\sigma]\in\Sigma_x(M,g)$ and $X=[p,v]\in T_xM$ we define the \textit{Clifford multiplication (on $\Sigma (M,g)$)} by $X\cdot\varphi:=[p,\rho(v)(\sigma)]$. From the relations of the Clifford algebra $\mathbb{C}l_n$, it follows that
\begin{align}\label{eq spingeo cliffordm}
	X\cdot (Y\cdot \varphi) + Y\cdot (X\cdot \varphi)=-2g(X,Y)\varphi,
\end{align} 
for all $X,Y\in T_xM$ and $\varphi\in\Sigma_x(M,g)$. For $n$ even, Clifford multiplication interchanges the factors $\Sigma^\pm(M,g)$. Moreover, given an oriented orthonormal basis $(e_1,\ldots,e_n)$ of $T_xM$ and $\varphi\in\Sigma_x(M,g)$ for $n$ odd, respectively $\varphi\in\Sigma^+_x(M,g)$ for $n$ even, we have
\begin{align}\label{eq spingeo cliffordm onb}
	i^{\lfloor\frac{n+1}{2}\rfloor} e_1\cdot\ldots\cdot e_n\cdot\varphi=\varphi.
\end{align}
To measure the length of spinors, we introduce a natural bundle metric on $\Sigma(M,g)$. First, given an irreducible representation $\rho\colon\mathbb{C}l_n\rightarrow \textup{End}(\Sigma_n)$ of $\mathbb{C}l_n$, there exists a hermitian inner product $\langle.,.\rangle_{\Sigma_n}$ on $\Sigma_n$ such that $\langle\rho(x)(\psi),\varphi\rangle_{\Sigma_n}=\langle\psi,\rho(x)(\varphi)\rangle_{\Sigma_n}$ for all $x\in\mathbb{R}^n$, $\varphi$, $\psi\in\Sigma_n$. In particular, the inner product $\langle.,.\rangle_{\Sigma_n}$ is $\textup{Spin}(n)$-invariant and therefore induces a bundle metric on $\Sigma(M,g)$, which we denote by $\langle.,.\rangle$. It holds that
\begin{align}\label{eq spingeo cliffordm scp}
	\langle X\cdot\psi,\varphi\rangle=-\langle\psi,X\cdot\varphi\rangle,
\end{align}
for all $X\in T_xM$, $\varphi$, $\psi\in\Sigma_x(M,g)$. For $\varphi\in\Gamma(\Sigma(M,g))$ we set $|\varphi|:=\sqrt{\langle\varphi,\varphi\rangle}$. In order to differentiate spinors we note that the Levi-Civita connection $\nabla$ on $(M,g)$ can be lifted to a metric connection $\nabla^{\Sigma(M,g)}$ on $\Sigma(M,g)$, the \textit{spinorial Levi-Civita connection}. For all $X$, $Y\in\Gamma(TM)$ and $\varphi\in\Gamma(\Sigma(M,g))$ we have 
\begin{align*}
	\nabla^{\Sigma(M,g)}_X(Y\cdot\varphi)=(\nabla_XY)\cdot\varphi + Y\cdot \nabla^{\Sigma(M,g)}_X\varphi.
\end{align*}
For $n$ even, the factors $\Sigma^\pm(M,g)$ are invariant under $\nabla^{\Sigma(M,g)}$. In particular, we get connections $\nabla^{\Sigma^\pm(M,g)}$ on $\Sigma^\pm(M,g)$. Denote by $R^{\Sigma(M,g)}$ the curvature of $\Sigma(M,g)$. Let $(e_1,\ldots,e_n)$ be a local orthonormal frame for $(M,g)$. It holds that
\begin{align}\label{eq spingeo curvature}
	R^{\Sigma(M,g)}(X,Y)\varphi=\frac12\sum_{1\le i < j\le n} g(R^M(X,Y)e_i,e_j)e_i\cdot(e_j\cdot \varphi),
\end{align}  
for all $X$, $Y\in\Gamma(TM)$, $\varphi\in\Gamma(\Sigma(M,g))$ where $R^M$ is the curvature of $(M,g)$.

\subsection{Generalized cylinders}
Details concerning this section can be found in \cite{BaerGaudMoroi}. Let $M$ be a manifold, $I\subset\mathbb{R}$ an interval and $(g_t)_{t\in I}$ a smooth 1-parameter family of Riemannian metrics on $M$. The \textit{generalized cylinder} is the Riemannian manifold $(\mathcal{Z},g_\mathcal{Z})$, where $\mathcal{Z}:=I\times M$ and $g_\mathcal{Z}:=dt^2+g_t$.
The Riemannian hypersurface $\{t\}\times M$ is isometric to $(M,g_t)$ and we denote both by $M_t$. Moreover, the vector field $\nu:=\frac{\partial}{\partial t}\in\Gamma(T\mathcal{Z})$ is of unit length and $\nu|{M_t}$ is orthogonal to $M_t$. We write $W=W_t$ for the Weingarten map of $M_t$ with respect to $\nu|_{M_t}$.

The following lemma will be used later.
\begin{lemma}
	For all $U$, $X$, $Y\in T_pM$, $p\in M$, and $t\in I$, it holds that
	\begin{align}
	\nabla^{\mathcal{Z}}_\nu \nu &=0,\notag \\
	g_t(W_t(X),Y)&=-\frac{1}{2}\dot{g}_t(X,Y),\notag \\
	g_\mathcal{Z}(R^\mathcal{Z}(X,Y)U,\nu)&=\frac{1}{2}((\nabla^{M_t}_Y\dot{g}_t)(X,U)-(\nabla^{M_t}_X\dot{g}_t)(Y,U)),\label{eq cy 3}\\
	g_\mathcal{Z}(R^\mathcal{Z}(X,\nu)\nu,Y)&=-\frac{1}{2}(\ddot{g}_t(X,Y)+\dot{g}_t(W_t(X),Y))\label{eq cy 4}.
	\end{align}
	If $\tilde{Z}\in\Gamma(T\mathcal{Z})$ with $\tilde{Z}(t,p)=(0,Z_t(p))\in T_tI\times T_pM$ for all $t\in I$, $p\in M$, then
	\begin{align}
	[\nu,\tilde{Z}](t,p)&=(0,\frac{d}{ds}\bigg|_{s=t}Z_s(p))\in T_tI\times T_pM,\label{eq cy 5}\\
	(\nabla^\mathcal{Z}_\nu \tilde{Z})(t,p)&=(0,\frac{d}{ds}\bigg|_{s=t}Z_s(p)-W_t(Z_t(p)))\label{eq cy 6}
	\end{align}
	for all $t\in I$, $p\in M$ where $\nabla^{\mathcal{Z}}$ is the Levi-Civita connection of $(\mathcal{Z},g_\mathcal{Z})$.
\end{lemma}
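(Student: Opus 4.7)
The core trick is to extend tangent vectors at $p\in M$ to \emph{time-independent} vector fields on $\mathcal{Z}$: given $X\in T_pM$, pick $\tilde X\in\Gamma(TM)$ with $\tilde X(p)=X$ and transport it as $\tilde X(t,q):=(0,\tilde X(q))$. Under the coordinate description $\nu=\partial/\partial t$ this gives $[\nu,\tilde X]=0$, which is really just the bracket statement \eqref{eq cy 5} applied to a $t$-independent $\tilde X$; the general form of \eqref{eq cy 5} then follows by the same direct coordinate computation of $[\nu,\tilde Z]f=\nu(\tilde Z f)-\tilde Z(\nu f)$, since the second derivative term in $t$ cancels. With this setup the scalar functions $g_\mathcal{Z}(\nu,\tilde X)$ and $g_\mathcal{Z}(\nu,\nu)$ vanish respectively equal $1$ identically, and $\nu$-derivatives of $g_\mathcal{Z}(\tilde X,\tilde Y)=g_t(\tilde X,\tilde Y)$ are exactly $\dot g_t,\ddot g_t$ applied to $(X,Y)$.

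From here $\nabla^\mathcal{Z}_\nu\nu=0$ drops out of Koszul: all six terms vanish because $g_\mathcal{Z}(\nu,\nu)$ is constant, $[\nu,\tilde X]=0$, and $g_\mathcal{Z}(\nu,\tilde X)=0$. The Weingarten identity is the Koszul computation
\[
g_\mathcal{Z}(\nabla^{\mathcal{Z}}_{\tilde X}\nu,\tilde Y)=\tfrac{1}{2}\,\nu\, g_\mathcal{Z}(\tilde X,\tilde Y)=\tfrac{1}{2}\dot g_t(X,Y),
\]
combined with $W_t(X)=-\nabla^{\mathcal{Z}}_{\tilde X}\nu|_{M_t}$ (the normal component vanishing because $g_\mathcal{Z}(\nu,\nu)$ is constant). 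For \eqref{eq cy 6} I then invoke torsion-freeness, $\nabla^{\mathcal{Z}}_\nu\tilde Z-\nabla^{\mathcal{Z}}_{\tilde Z}\nu=[\nu,\tilde Z]$, plug in \eqref{eq cy 5} and the Weingarten formula just established, and read off that the horizontal component of $\nabla^{\mathcal{Z}}_\nu\tilde Z$ is $\frac{d}{ds}|_{s=t}Z_s(p)-W_t(Z_t(p))$; the $\nu$-component vanishes since $\nu g_\mathcal{Z}(\tilde Z,\nu)=0$.

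For the two curvature identities the calculation is cleanest in terms of the fields extended as above. Formula \eqref{eq cy 4} follows from
\[
R^{\mathcal{Z}}(\tilde X,\nu)\nu
=\nabla^{\mathcal{Z}}_{\tilde X}\nabla^{\mathcal{Z}}_\nu\nu-\nabla^{\mathcal{Z}}_\nu\nabla^{\mathcal{Z}}_{\tilde X}\nu-\nabla^{\mathcal{Z}}_{[\tilde X,\nu]}\nu
=\nabla^{\mathcal{Z}}_\nu\bigl(W_t(\tilde X)\bigr),
\]
and then pairing with $\tilde Y$ and using $\nu g_t(W_t X,Y)=-\tfrac12\ddot g_t(X,Y)$ together with $\nabla^{\mathcal{Z}}_\nu\tilde Y=-W_t(\tilde Y)$ (by \eqref{eq cy 6}) and the self-adjointness of $W_t$; the cross term then reassembles via Weingarten as $-\tfrac12\dot g_t(W_t X,Y)$. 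Formula \eqref{eq cy 3} is the Codazzi-Mainardi equation for the hypersurface $M_t\subset\mathcal{Z}$ with second fundamental form $\mathrm{II}(X,Y)=g_t(W_t(X),Y)=-\tfrac12\dot g_t(X,Y)$; one obtains it either from the standard proof (write out $\nabla^{\mathcal{Z}}_{\tilde X}\nabla^{\mathcal{Z}}_{\tilde Y}\tilde U$ using the Gauss formula, take the $\nu$-component, antisymmetrize in $X,Y$) or by invoking Codazzi and substituting $\mathrm{II}$.

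The only real bookkeeping issue is sign-tracking in the Weingarten convention (the choice $W_t(X)=-\nabla^{\mathcal{Z}}_X\nu$ is what makes the coefficient $-\tfrac12$ appear, and it then propagates consistently into \eqref{eq cy 3} and \eqref{eq cy 4}); everything else is routine once the time-independent extensions are in place and the brackets $[\nu,\tilde X]$ vanish.
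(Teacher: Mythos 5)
Your plan is correct: the sign convention $W_t(X)=-\nabla^{\mathcal Z}_X\nu$ is the one consistent with the stated identity $g_t(W_t(X),Y)=-\tfrac12\dot g_t(X,Y)$, and with $t$-independent extensions and $[\nu,\tilde X]=0$ all six formulas follow exactly as you describe (Koszul for the first two, torsion-freeness for \eqref{eq cy 5}--\eqref{eq cy 6}, differentiating the Weingarten identity plus self-adjointness of $W_t$ for \eqref{eq cy 4}, and Codazzi--Mainardi with $\mathrm{II}=-\tfrac12\dot g_t$ for \eqref{eq cy 3}). The paper itself gives no proof of this lemma but defers to B\"ar--Gauduchon--Moroianu, and your argument is essentially the standard derivation found there, so there is nothing to object to beyond the sign bookkeeping you already flag.
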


The next lemma describes how we can identify spinors of different spinor bundles with the help of generalized cylinders.

\begin{lemma}\label{lemma cyl identi} Let $M$ be an oriented manifold together with a topological spin structure $\widetilde{\textup{GL}}^+M$. The topological spin structure on $M$  induces a metric spin structure on $(\mathcal{Z}, g_\mathcal{Z})$ and metric spin structures $\textup{Spin}(M,g_t):=\widetilde{\textup{GL}}^+M|_{\textup{SO}(M,g_t)}$ on $(M,g_t)$. For the respective spinor bundles we have the following isomorphisms of vector bundles: If $n$ is even, then $\Sigma(\mathcal Z,g_\mathcal{Z})|_{M_t}\cong\Sigma M_t$. If $n$ is odd, then $\Sigma^+(\mathcal Z,g_\mathcal{Z})|_{M_t}\cong\Sigma M_t$. The bundle metrics $\langle.,.\rangle$ are preserved by these isomorphisms. Moreover, if ``$\cdot$'' and ``$\cdot_t$'' denote the Clifford multiplications in $\Sigma^{(+)}(\mathcal{Z},g_\mathcal{Z})$ and $\Sigma M_t$, then it holds that 
\[\nu\cdot(X\cdot\varphi)=X\cdot_t\varphi\]
for all $X\in TM$, $\varphi\in\Sigma M_t$. If we write $\nabla^t=\nabla^{\Sigma M_t}$, then we have
\[\nabla^{\Sigma(\mathcal{Z},g_\mathcal{Z})}_X\varphi=\nabla^t_X\varphi -\frac{1}{2}W_t(X)\cdot_t\varphi\]
for all $\varphi\in\Gamma(\Sigma M_t)$.
\end{lemma}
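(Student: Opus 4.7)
The plan is to construct the identification in three steps: at the level of frame bundles, at the level of Clifford algebras and their representations, and finally for the spin connections. The only real difficulty is book-keeping of sign conventions, the sign of the complex volume element $\omega_n$, and the parity cases, so that in odd dimension the identification really lands in $\Sigma^+(\mathcal{Z},g_\mathcal{Z})$ rather than $\Sigma^-(\mathcal{Z},g_\mathcal{Z})$ and is compatible with the normalisation $\rho(\omega_n)=\textup{id}$.

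First, since $\nu=\partial/\partial t$ is a globally defined unit normal to each $M_t$, any oriented $g_t$-orthonormal frame $(e_1,\dots,e_n)$ of $M_t$ extends canonically to the oriented $g_\mathcal{Z}$-orthonormal frame $(\nu,e_1,\dots,e_n)$ of $\mathcal{Z}$ along $M_t$, producing an embedding $\textup{SO}(M,g_t)\hookrightarrow \textup{SO}(\mathcal{Z},g_\mathcal{Z})|_{M_t}$. A metric spin structure $\textup{Spin}(\mathcal{Z},g_\mathcal{Z})$ is obtained from $\widetilde{\textup{GL}}^+M$ by pullback to $\mathcal{Z}$ along the projection (with the obvious inclusion $\widetilde{GL}^+(n,\mathbb{R})\hookrightarrow\widetilde{GL}^+(n+1,\mathbb{R})$) and restriction to $\textup{SO}(\mathcal{Z},g_\mathcal{Z})$, and it covers the above orthonormal embedding by a spin-level embedding $\textup{Spin}(M,g_t)\hookrightarrow \textup{Spin}(\mathcal{Z},g_\mathcal{Z})|_{M_t}$.

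The relevant algebraic fact for the spinor bundles is that the complex spinor representation $\Sigma_{n+1}^{(+)}$ of $\textup{Spin}(n+1)$, restricted to the subgroup $\textup{Spin}(n)$, is equivalent to the complex spinor representation $\Sigma_n$ of $\textup{Spin}(n)$. Setting $\tilde\rho(v):=\rho(e_0)\rho(v)$ for $v\in\mathbb{R}^n$ (with $e_0$ playing the role of $\nu$), a short computation using $\rho(e_0)^2=-\textup{id}$ and anticommutation yields $\tilde\rho(v)\tilde\rho(w)+\tilde\rho(w)\tilde\rho(v)=-2\langle v,w\rangle\,\textup{id}$, so $\tilde\rho$ extends to a representation of $\mathbb{C}l_n$; in odd dimensions the normalisation $\rho(\omega_{n+1})|_{\Sigma_{n+1}^+}=\textup{id}$ forces $\tilde\rho(\omega_n)=\textup{id}$, so $\tilde\rho$ is the irreducible representation used to define $\Sigma M_t$. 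Passing to associated bundles produces the isomorphism $\Sigma^{(+)}(\mathcal{Z},g_\mathcal{Z})|_{M_t}\cong\Sigma M_t$ together with the Clifford relation $\nu\cdot(X\cdot\varphi)=X\cdot_t\varphi$, while $\textup{Spin}(n+1)$-invariance of the hermitian product on $\Sigma_{n+1}^{(+)}$ immediately gives $\textup{Spin}(n)$-invariance, so the bundle metric is preserved.

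For the connection formula, the Gauss formula gives $\nabla^\mathcal{Z}_X Y=\nabla^t_X Y-g_t(W_t(X),Y)\nu$ for $X,Y\in\Gamma(TM_t)$, together with $\nabla^\mathcal{Z}_X\nu=-W_t(X)$ (the latter being tangent since $|\nu|\equiv 1$). In the extended local frame $(e_0,e_1,\dots,e_n):=(\nu,e_1,\dots,e_n)$ the $\mathcal{Z}$-connection one-forms therefore satisfy $\omega^\mathcal{Z}_{0i}(X)=-g_t(W_t(X),e_i)$ and $\omega^\mathcal{Z}_{ij}(X)=\omega^t_{ij}(X)$ for $i,j\geq 1$. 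Plugging this into the standard formula
\[\nabla^{\Sigma(\mathcal{Z},g_\mathcal{Z})}_X\varphi=X(\varphi)+\tfrac12\sum_{0\leq a<b\leq n}\omega^\mathcal{Z}_{ab}(X)\,e_a\cdot e_b\cdot\varphi,\]
separating off the $a=0$ terms, and using $\nu\cdot e_i\cdot\varphi=e_i\cdot_t\varphi$ (which also gives $e_i\cdot e_j\cdot\varphi=e_i\cdot_t e_j\cdot_t\varphi$ for $i\neq j$), the $i,j\geq 1$ part reassembles into $\nabla^t_X\varphi-X(\varphi)$ while the $a=0$ part contributes exactly $-\tfrac12 W_t(X)\cdot_t\varphi$, yielding the claim.
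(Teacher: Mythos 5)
Your proof is correct and is essentially the standard argument of B\"ar--Gauduchon--Moroianu, which the paper itself invokes for this lemma rather than reproducing: embed frames by prepending $\nu$, twist the Clifford action by $\rho(e_0)$ (with the check that $\tilde\rho(\omega_n)=\mathrm{id}$ on $\Sigma^+_{n+1}$ in the odd case, which indeed matches the paper's normalization), and compare connection one-forms to get the spinorial Gauss formula. One harmless sign slip: with the paper's convention $g_t(W_t(X),Y)=-\tfrac12\dot g_t(X,Y)$ one has $\nabla^{\mathcal{Z}}_X\nu=-W_t(X)$ and therefore $\nabla^{\mathcal{Z}}_XY=\nabla^t_XY+g_t(W_t(X),Y)\nu$ rather than with a minus sign; since only the tangential part of the Gauss formula and the relation $\nabla^{\mathcal{Z}}_X\nu=-W_t(X)$ enter your connection-form computation, your final formula is unaffected.
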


\subsection{The spinorial energy functional and its gradient flow}\label{sect sflow}
In the following we work with the real part of $\langle.,.\rangle$ and we write $(.,.):=\textup{Re}\langle.,.\rangle$. It will be useful that $(X\cdot\varphi,Y\cdot\varphi)=g(X,Y)(\varphi,\varphi)$ and $(X\cdot\varphi,\varphi)=0$ hold for all $X$, $Y\in T_xM$ and $\varphi\in\Sigma_x(M,g)$. These identities follow directly from \eqref{eq spingeo cliffordm} and \eqref{eq spingeo cliffordm scp}.

Let $M$ be a connected, compact, oriented manifold with a fixed topological spin structure $\widetilde{\textup{GL}}^+M$ and $\textup{dim}M\ge 2$. As stated before, every choice of Riemannian metric $g$ on $M$ defines a metric spin structure $\textup{Spin}(M,g):=\widetilde{\textup{GL}}^+M|_{\textup{SO}(M,g)}$ and so we have the corresponding spinor bundles $\Sigma(M,g)$. We set $\mathcal{N}_g:=\{\varphi\in\Gamma(\Sigma(M,g))\text{ }|\text{ }|\varphi|=1\}$, and $\mathcal{N}:=\bigsqcup_{g\in\mathcal{M}}\mathcal{N}_g$. The \textit{spinorial energy functional} $\mathcal{E}$ is defined by
\[\mathcal{E}\colon \mathcal{N}\rightarrow [0,\infty), \hspace{3em}(g,\varphi)\mapsto \frac{1}{2}\int_M|\nabla^{\Sigma(M,g)}\varphi|^2dv^g,\]
As mentioned in the introduction there exists a natural connection on the Fréchet-bundle $\mathcal{N}\rightarrow\mathcal M$. For details we refer to \cite{AWWsflow}. From that connection we get horizontal tangent spaces $\mathcal{H}_{(g,\varphi)}\cong\Gamma(\odot^2T^*M)$ and a splitting
\begin{align} \label{eq z2}
T_{(g,\varphi)}\mathcal{N}\cong\Gamma(\odot^2T^*M)\oplus V_{(g,\varphi)}
\end{align}
where
\[V_{(g,\varphi)}=\{\psi\in\Gamma(\Sigma(M,g))\text{ }|\text{ }(\varphi,\psi)=0 \}.\]
On the first factor, we choose the inner product which we get by integrating the natural inner product on $(2,0)$-tensors. On the second factor, we choose the $L^2$-inner product defined by
\[(\psi_1,\psi_2)_{L^2}:=\int_M(\psi_1,\psi_2)dv^g,\]
for $\psi_1$, $\psi_2\in V_{(g,\varphi)}$. The negative gradient flow of $\mathcal{E}$,
\[\frac{\partial}{\partial t}(g_t,\varphi_t)=-\textup{grad}\mathcal{E}_{(g_t,\varphi_t)},\]
is called the \textit{spinor flow}. Under the splitting \eqref{eq z2} we have 
\[-\textup{grad}\mathcal{E}_{(g,\varphi)}=(Q_1(g,\varphi),Q_2(g,\varphi)),\] 
with
\begin{align*}
Q_1(g,\varphi)&=-\frac{1}{4}|\nabla^{\Sigma(M,g)}\varphi|^2g-\frac{1}{4}\textup{div}_g T_{g,\varphi}+\frac{1}{2}\left(\nabla^{\Sigma(M,g)}\varphi\otimes\nabla^{\Sigma(M,g)}\varphi\right),\\
Q_2(g,\varphi)&=-\left(\left({\nabla^{\Sigma (M,g)}}\right)^*\nabla^{\Sigma (M,g)}\right)\varphi+|\nabla^{\Sigma(M,g)}\varphi|^2\varphi,
\end{align*}
for all $(g,\varphi)\in\mathcal{N}$. \begin{sloppypar}
Here, $\left(\nabla^{\Sigma(M,g)}\varphi\otimes\nabla^{\Sigma(M,g)}\varphi\right)(X,Y):=\left(\nabla^{\Sigma(M,g)}_X\varphi,\nabla^{\Sigma(M,g)}_Y\varphi\right)$
for all $X,Y\in\Gamma(TM)$, and $T_{g,\varphi}$ is the symmetrization of $\left(X\cdot Y\cdot\varphi + g(X,Y)\varphi,\nabla^{\Sigma(M,g)}_Z\varphi\right)$ in the second and third component where $X,Y,Z\in\Gamma(TM)$.\end{sloppypar}
The \textit{(volume) normalized spinor flow} is the negative gradient flow of $\mathcal{E}|_{\mathcal{N}_1}$ for $\mathcal{N}_1:=\{(g,\varphi)\in\mathcal{N}\text{ }|\text{ } \textup{vol}(M,g)=1\}$. We have $-\textup{grad}(\mathcal{E}|_{\mathcal{N}_1})=(\tilde{Q}_1,Q_2|_{\mathcal{N}_1})$ with 
\[\tilde{Q}_1(g,\varphi)=Q_1(g,\varphi)+\frac{n-2}{2n}\frac{1}{\textup{vol}(M,g)}\mathcal{E}(g,\varphi)g\]
for all $(g,\varphi)\in\mathcal{N}_1$. Note that in the above identity for $\tilde{Q}_1$ the ``$\frac{1}{\textup{vol}(M,g)}$'' is equal to $1$. However, it is important in our strategy of solving the normalized spinor flow. More concretely, we will construct $(g_t,\varphi_t)$ with $|\varphi_t|=1$ for all $t$ and $\frac{\partial}{\partial t}(g_t,\varphi_t)=(\tilde{Q}_1(g_t,\varphi_t),Q_2(g_t,\varphi_t))$ where for the initial value we have $\textup{vol}(M,g_0)=1$. To make sure that $(g_t,\varphi_t)$ is a solution of the normalized spinor flow, we then need to verify that $\textup{vol}(M,g_t)=1$ for all $t$. To that end, we calculate
\begin{align*}
\frac{d}{dt}\bigg|_{t=s}\textup{vol}(M,g_t)&=\frac{d}{dt}\bigg|_{t=s}\int_M dv^{g_t}=\int_M\frac{d}{dt}\bigg|_{t=s}dv^{g_t}\\
&=\int_M (\frac{1}{2}\textup{tr}_{g_{s}}\dot{g}_{s})dv^{g_{s}}\\
&=\int_M (\frac{1}{2}\textup{tr}_{g_{s}}\tilde{Q}_1(g_s,\varphi_s))dv^{g_{s}}\\
&=\frac{1}{2}\int_M\textup{tr}_{g_{s}}Q_1(g_s,\varphi_s)dv^{g_{s}}+\frac{1}{2}\frac{n-2}{2n}\frac{1}{\textup{vol}(M,g_s)}\mathcal{E}(g_s,\varphi_s)\int_M\textup{tr}_{g_s}g_sdv^{g_s}\\
&=\frac{1}{2}\int_M-\frac{n-2}{4}|\nabla^{\Sigma(M,g_s)}\varphi_s|^2dv^{g_s}+\frac{1}{2}\frac{n-2}{2}\frac{1}{\textup{vol}(M,g_s)}\mathcal{E}(g_s,\varphi_s)\textup{vol}(M,g_s)\\
&=-\frac{1}{2}\frac{n-2}{2}\mathcal{E}(g_s,\varphi_s)+\frac{1}{2}\frac{n-2}{2}\mathcal{E}(g_s,\varphi_s)\\
&=0,
\end{align*}
so $\textup{vol}(M,g_t)=1$ for every $t$ and $(g_t,\varphi_t)$ is in fact a solution of the normalized spinor flow.

\section{Solutions of the spinor flow on Berger spheres}
In this section we state and prove our main results, Theorem \ref{main result 1} and Theorem \ref{main result 2}. First we collect necessary technical ingredients. Then we define $S^1$-invariant spinors, which are part of the initial values of theorem \ref{main result 1}. After that we prove our main results with the strategy explained in the introduction.

As stated in the introduction we view $S^3$ as a $S^1$-principal bundle over $S^2$ via the Hopf fibration $\pi\colon S^3\rightarrow S^2$. If we equip $S^2=\mathbb{C}P^1$ with the Fubini-Study metric $g_{FS}$, then the Hopf fibration $\pi\colon (S^3,g_{S^3})\rightarrow (S^2,g_{FS})$ turns into a Riemannian submersion.  The action of $S^1$ on $S^3$ induces a global flow whose infinitesimal generator we denote by $K\in\Gamma(TS^3)$. We have that $K(p)\in\textup{ker}(d\pi_p)$ for all $p\in S^3$ and $K$ is of unit length with respect to $g_{S^3}$. On $S^3$ we choose the connection which is given by
\[p\mapsto \textup{ker}(d\pi_p)^\bot.\]
This connection induces a connection form $\tilde{\omega}\colon TS^3\rightarrow i\mathbb{R}$. It holds that $\tilde{\omega}(K)= i$. We write $\omega:=\frac{1}{i}\tilde{\omega}$ and denote by $d\omega$ the differential of $\omega$, i.e. 
\[d\omega(X,Y)=L_X(\omega(Y))-L_Y(\omega(X))-\omega([X,Y]).\]
With $X^*$ we denote the horizontal lift of $X$ (with respect to the above connection).
\begin{remark}[Orientation convention for $S^3$] For the rest of this paper we fix an orientation on $S^2$. On $S^3$ we fix the orientation that satisfies the following: If $(f_1,f_2)$ is any oriented local orthonormal frame for $(S^2,g_{FS})$, then $(K,f_1^*,f_2^*)$ is an oriented local orthonormal frame for $(S^3,g_{S^3})$. 
\end{remark}
In Remark \ref{rem change or conv} we explain how our results change if we choose the other orientation on $S^3$ (i.e. $(-K,f_1^*,f_2^*)$ is oriented).

\begin{remark}[Notation for frames] For the rest of this paper we use the following notation: $(f_1,f_2)$ denotes an arbitrary oriented local orthonormal frame for $(S^2,g_{FS})$. Moreover, we set
\begin{align*}
(f_1(t),f_2(t))&:=(\frac{1}{\beta(t)}f_1,\frac{1}{\beta(t)}f_2),\\
(e_0(t), e_1(t), e_2(t))&:=(\frac{1}{\alpha(t)}K, f_1(t)^*,f_2(t)^*).
\end{align*}
Then $(e_0(t), e_1(t), e_2(t))$ is an oriented local orthonormal frame for $(S^3,g_t)$ where $g_t$, $\alpha(t)$, and $\beta(t)$ are defined by \eqref{eq ansatz for g_t}.
\end{remark}

We set
\[a:=d\omega(f_1^*,f_2^*)=\pm 2.\]
Note that $a$ is a constant that does not depend on the choice of the oriented local orthonormal frame $(f_1,f_2)$.

\begin{lemma}\label{lemma z1}If $\nabla^t$ denotes the Levi-Civita connection on $(S^3,g_t)$ and $(S^2,\beta(t)^2g_{FS})$ respectively, we have
\begin{align*}
\nabla^t_{e_0(t)}e_0(t)&=0,\hspace{5em} & \nabla^t_{e_1(t)}e_2(t)&=-\frac{1}{2}\frac{\alpha(t)}{\beta(t)^2}ae_0(t)+\left(\nabla^t_{f_1(t)}f_2(t)\right)^*,\\
\nabla^t_{e_0(t)}e_1(t)&=\frac{1}{2}\frac{\alpha(t)}{\beta(t)^2}ae_2(t),\hspace{3em} & \nabla^t_{e_2(t)}e_1(t)&=\frac{1}{2}\frac{\alpha(t)}{\beta(t)^2}ae_0(t)+\left(\nabla^t_{f_2(t)}f_1(t)\right)^*,\\
\nabla^t_{e_0(t)}e_2(t)&=-\frac{1}{2}\frac{\alpha(t)}{\beta(t)^2}ae_1(t),\hspace{3em} &\nabla^t_{e_2(t)}e_0(t)&=-\frac{1}{2}\frac{\alpha(t)}{\beta(t)^2}ae_1(t),\\
\nabla^t_{e_1(t)}e_0(t)&=\frac{1}{2}\frac{\alpha(t)}{\beta(t)^2}ae_2(t), \hspace{3em} &\nabla^t_{e_2(t)}e_2(t)&=\left(\nabla^t_{f_2(t)}f_2(t)\right)^*,\\
\nabla^t_{e_1(t)}e_1(t)&=\left(\nabla^t_{f_1(t)}f_1(t)\right)^*.
\end{align*}
\end{lemma}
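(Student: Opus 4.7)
The plan is to compute the connection coefficients by combining two classical tools: O'Neill's submersion formulas for the horizontal-horizontal components, and the Killing property of $K$ (equivalently of $e_0(t)=K/\alpha(t)$) for the components involving the fiber direction. The first observation is that, with the rescaling along fibers, the Hopf fibration $\pi\colon (S^3,g_t)\to (S^2,\beta(t)^2g_{FS})$ remains a Riemannian submersion; indeed, for $X\in\ker(d\pi)^\perp$ one has $g_t(X,X)=\beta(t)^2g_{S^3}(X,X)=\beta(t)^2g_{FS}(d\pi X, d\pi X)$, and $f_i(t)=f_i/\beta(t)$ is a local orthonormal frame of the base. Since a constant rescaling of the base metric leaves the Levi-Civita connection unchanged, $\nabla^t$ on $(S^2,\beta(t)^2g_{FS})$ agrees with $\nabla^{g_{FS}}$, which is what allows the horizontal parts to be written as $(\nabla^t_{f_i(t)}f_j(t))^*$.

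The next step is to compute all relevant Lie brackets of the frame $(e_0(t),e_1(t),e_2(t))$. Because the $S^1$-action preserves the horizontal distribution and $f_i^*$ is the $S^1$-invariant horizontal lift, one has $[K,f_i^*]=0$, hence $[e_0(t),e_i(t)]=0$ for $i=1,2$. For the horizontal bracket I use the identity
\[
[f_1^*,f_2^*] \;=\; [f_1,f_2]^* - d\omega(f_1^*,f_2^*)\,K \;=\; [f_1,f_2]^* - aK,
\]
which comes from $\omega(f_i^*)=0$ together with the definition of $d\omega$, plus $\omega(K)=1$; rescaling gives $[e_1(t),e_2(t)]=\tfrac{1}{\beta(t)^2}[f_1,f_2]^* - \tfrac{a\alpha(t)}{\beta(t)^2}e_0(t)$.

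With these brackets in hand, O'Neill's formula for a Riemannian submersion states that, for horizontal lifts $X^*,Y^*$,
\[
\nabla^t_{X^*}Y^* \;=\; (\nabla^t_{X}Y)^* + \tfrac{1}{2}[X^*,Y^*]^V.
\]
Applied to the four pairings $(e_i(t),e_j(t))$ with $i,j\in\{1,2\}$, this immediately yields the formulas for $\nabla^t_{e_1}e_1$, $\nabla^t_{e_2}e_2$, $\nabla^t_{e_1}e_2$, $\nabla^t_{e_2}e_1$ in the statement, the term $-\tfrac12\tfrac{a\alpha}{\beta^2}e_0$ (and $+\tfrac12\tfrac{a\alpha}{\beta^2}e_0$) coming from the vertical bracket computed above.

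For the remaining five identities involving $e_0(t)$ I use that $K$ is a Killing vector field of $g_{S^3}$ (generator of the Hopf $S^1$-action) and that the $S^1$-action preserves $g_t$, so $e_0(t)$ is a unit Killing field on $(S^3,g_t)$. This gives three facts: $\nabla^t e_0(t)$ is $g_t$-antisymmetric, $g_t(\nabla^t_X e_0(t),e_0(t))=0$ for all $X$, and $\nabla^t_{e_0(t)}e_0(t)=0$ (since it has constant length and its integral curves are geodesics). Combining antisymmetry with the metric-compatibility relation $g_t(\nabla^t_{e_i}e_0,e_j) = -g_t(e_0,\nabla^t_{e_i}e_j)$ and the already computed values of $g_t(\nabla^t_{e_1}e_2,e_0)$, $g_t(\nabla^t_{e_2}e_1,e_0)$ pins down $\nabla^t_{e_1}e_0$, $\nabla^t_{e_2}e_0$, and then antisymmetry in the two arguments $(e_0,e_i)$ gives $\nabla^t_{e_0}e_1$ and $\nabla^t_{e_0}e_2$. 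The main thing to watch is sign bookkeeping between the two sign conventions entering (the vertical part of $[e_1,e_2]$ and the antisymmetry of $\nabla e_0$); once these are tracked consistently, the nine identities of the lemma follow directly. No subtle analytic step is needed — the computation is essentially an exercise in O'Neill plus Killing.
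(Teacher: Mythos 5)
Your argument is correct and complete, but it is organized differently from the paper's. The paper proves the lemma by a single direct computation: it observes $[e_0(t),e_j(t)]=0$ (right-invariance of horizontal lifts) and then feeds the frame into the Koszul formula to read off all Christoffel symbols $\tilde\Gamma_{ij}^k$ at once, the horizontal-horizontal ones reducing to those of $(f_1(t),f_2(t))$ on the base. You instead package the same bracket data — $[e_0(t),e_i(t)]=0$ and $[f_1^*,f_2^*]=[f_1,f_2]^*-aK$ (correctly extracted from the paper's definition of $d\omega$ and $\omega(K)=1$) — into two standard structural results: O'Neill's formula $\nabla_{X^*}Y^*=(\nabla^B_XY)^*+\tfrac12[X^*,Y^*]^V$ for the four horizontal-horizontal covariant derivatives, after checking that $\pi\colon(S^3,g_t)\to(S^2,\beta(t)^2g_{FS})$ is still a Riemannian submersion with the $e_i(t)$ basic, and the Killing property of $K$ (hence of $e_0(t)$) for $g_t$, which is legitimate because the Berger metrics are built $S^1$-invariantly, for the five identities involving $e_0(t)$. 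What your route buys is conceptual economy and a clear reason for the structure of the answer (the $A$-tensor term $\mp\tfrac12\tfrac{\alpha a}{\beta^2}e_0$ and the skew endomorphism $\nabla e_0$); what the paper's Koszul computation buys is complete self-containedness and uniform treatment of all nine identities without invoking O'Neill. One small wording issue: your final step, "antisymmetry in the two arguments $(e_0,e_i)$ gives $\nabla^t_{e_0}e_1$ and $\nabla^t_{e_0}e_2$", is really torsion-freeness combined with $[e_0(t),e_i(t)]=0$, i.e. $\nabla^t_{e_0(t)}e_i(t)=\nabla^t_{e_i(t)}e_0(t)$ (a symmetry statement, not an antisymmetry one); since you have already established the vanishing brackets this is immediate, but you should state it that way.
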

\begin{proof} Since horizontal lifts are right invariant, it follows that
\[ [e_0(t),e_j(t)]=0 \]
on $S^3$ for $j=1,2$. Using the Koszul formula we then compute the Christoffel symbols $\tilde{\Gamma}_{ij}^k$ of $(e_0(t), e_1(t), e_2(t))$ with respect to $\nabla^t$:
\begin{align*}
-\tilde{\Gamma}_{ij}^0&=\tilde{\Gamma}_{i0}^j=\tilde{\Gamma}_{0i}^j=\frac{1}{2}\frac{\alpha(t)}{\beta(t)^2}a,\\
\tilde{\Gamma}_{00}^i&=\tilde{\Gamma}_{i0}^0=\tilde{\Gamma}_{0i}^0=\tilde{\Gamma}_{00}^0=0,\\
\tilde{\Gamma}_{ij}^k&=\Gamma_{ij}^k\circ \pi,
\end{align*}
for $i,j,k\in\{1,2\}$ where $\Gamma_{ij}^k$ are the Christoffel symbols of $(f_1(t), f_2(t))$ with respect to $\nabla^t$. The lemma now follows from an easy computation.
\end{proof}

\begin{lemma}\label{lemma curv in cyl}For all $\varphi\in\Gamma(\Sigma^+(\mathcal Z,g_\mathcal{Z}))$ and all horizontal vector fields $Y\in\Gamma(TS^3)$ it holds that
\begin{align*}
R^{\Sigma^+(\mathcal Z,g_\mathcal{Z})}(\nu,K)\varphi&=\frac{1}{2}\left(\frac{\alpha''(t)}{\alpha(t)}+\frac{\alpha'(t)}{\beta(t)^2}a-\frac{\alpha(t)\beta'(t)}{\beta(t)^3}a\right)\nu\cdot K\cdot\varphi,\\
R^{\Sigma^+(\mathcal Z,g_\mathcal{Z})}(\nu,Y)\varphi&=\frac{1}{2}\left(\frac{\beta''(t)}{\beta(t)}-\frac{1}{2}\frac{\alpha'(t)}{\beta(t)^2}a+\frac{1}{2}\frac{\alpha(t)\beta'(t)}{\beta(t)^3}a\right)\nu\cdot Y\cdot\varphi
\end{align*}
where $(\mathcal{Z},g_\mathcal{Z})$ is with respect to \eqref{eq ansatz for g_t}.
\end{lemma}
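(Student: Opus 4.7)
The plan is to apply the spin curvature formula \eqref{eq spingeo curvature} inside the cylinder $(\mathcal{Z},g_\mathcal{Z})$ with respect to the $g_\mathcal{Z}$-orthonormal frame $(E_0,E_1,E_2,E_3):=(\nu, e_0(t), e_1(t), e_2(t))$, so that for each of the two cases ($X=K$ or $X$ horizontal)
\[
R^{\Sigma^+(\mathcal{Z})}(\nu,X)\varphi=\tfrac{1}{2}\sum_{0\le i<j\le 3}g_\mathcal{Z}(R^\mathcal{Z}(\nu,X)E_i,E_j)\,E_i\cdot E_j\cdot\varphi,
\]
and to reduce every coefficient to derivatives of $\alpha,\beta$ and the constant $a$ via \eqref{eq cy 3}--\eqref{eq cy 4}. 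The residual Clifford products will then be translated back into $\nu\cdot X\cdot\varphi$ using Lemma \ref{lemma cyl identi}.

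First I would treat the pairs $(E_i,E_j)$ with $E_i=\nu$ via \eqref{eq cy 4}. Because the ansatz \eqref{eq ansatz for g_t} preserves the vertical/horizontal splitting, $\dot g_t$ vanishes on mixed pairs and the Weingarten map decomposes as $W_t(K)=-\tfrac{\alpha'}{\alpha}K$, $W_t(Y)=-\tfrac{\beta'}{\beta}Y$ for horizontal $Y$. A short calculation then shows that for $X=K$ only $g_\mathcal{Z}(R^\mathcal{Z}(\nu,K)\nu,e_0)=\alpha''$ survives, contributing $\tfrac{\alpha''}{\alpha}\nu\cdot K\cdot\varphi$ after writing $e_0=\tfrac{1}{\alpha}K$; analogously for horizontal $X=e_1$ (the general case follows by linearity and rotational symmetry in the horizontal plane) one obtains $\tfrac{\beta''}{\beta}\nu\cdot e_1\cdot\varphi$.

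Next I would handle the purely spatial pairs using the pair symmetry $g(R(X,Y)Z,W)=g(R(Z,W)X,Y)$ to rewrite $g_\mathcal{Z}(R^\mathcal{Z}(\nu,X)E_i,E_j)=g_\mathcal{Z}(R^\mathcal{Z}(E_i,E_j)\nu,X)$, and then \eqref{eq cy 3} to reduce this to $(\nabla^t\dot g_t)$ on the frame, with the Christoffel symbols supplied by Lemma \ref{lemma z1}. The pure $S^2$-Christoffels $\gamma_{ij}^k$ of $\nabla^{\beta^2 g_{FS}}$ will appear in intermediate steps but should cancel out by metric compatibility ($\gamma_{ij}^k=-\gamma_{ik}^j$), leaving only the $a$-terms of the connection. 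For $X=K$ only the $(e_1,e_2)$-coefficient survives, giving $\tfrac{\alpha\alpha' a}{\beta^2}-\tfrac{\alpha^2\beta' a}{\beta^3}$; for $X=e_1$ only the $(e_0,e_2)$-coefficient survives, giving $\tfrac{\alpha' a}{2\beta^2}-\tfrac{\alpha\beta' a}{2\beta^3}$.

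Finally I would translate the residual products $e_1\cdot e_2\cdot\varphi$ and $e_0\cdot e_2\cdot\varphi$ into $\nu\cdot K\cdot\varphi$ and $\nu\cdot e_1\cdot\varphi$ respectively. Lemma \ref{lemma cyl identi} gives $\nu\cdot X\cdot=X\cdot_t$ on $\Sigma^+(\mathcal{Z})|_{M_t}$, hence $X\cdot Y\cdot =X\cdot_t Y\cdot_t$ on $\Sigma^+$; together with \eqref{eq spingeo cliffordm onb} applied to the oriented frame $(e_0(t),e_1(t),e_2(t))$, this yields $e_1\cdot e_2\cdot\varphi=\tfrac{1}{\alpha}\nu\cdot K\cdot\varphi$ and $e_0\cdot e_2\cdot\varphi=-\nu\cdot e_1\cdot\varphi$. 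Inserting these identifications and collecting the prefactor $\tfrac{1}{2}$ should reproduce both stated formulas. The hardest part will be the cancellation of the $\gamma_{ij}^k$ in the spatial-pair step, together with shepherding the signs through the two curvature symmetries and through the Clifford identifications, since Clifford multiplication by $\nu$ swaps $\Sigma^\pm(\mathcal{Z})$ and one has to check at each stage whether an intermediate expression actually lives in $\Sigma^+$ before the identification can be applied.
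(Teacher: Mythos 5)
Your proposal is correct and follows essentially the same route as the paper: apply \eqref{eq spingeo curvature} in the cylinder with the frame $(\nu,e_0(t),e_1(t),e_2(t))$, compute the relevant components $R_{\nu,e_0,\nu,e_0}$, $R_{\nu,e_0,e_1,e_2}$ (and their horizontal analogues) from \eqref{eq cy 3}--\eqref{eq cy 4} and Lemma \ref{lemma z1}, and convert the surviving Clifford products such as $e_1\cdot e_2\cdot\varphi=\nu\cdot e_0\cdot\varphi$ via \eqref{eq spingeo cliffordm onb} and Lemma \ref{lemma cyl identi}; your coefficient values and signs all match. The only difference is that you spell out the component computation (pair symmetry plus \eqref{eq cy 3}) which the paper delegates to the reference [JW, Lemma 6.9].
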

\begin{proof} We use \eqref{eq spingeo curvature}. As local orthonormal frame for $\mathcal{Z}$ we choose $(\nu,e_0(.), e_1(.), e_2(.))$. Furthermore, we use the notation
\[R_{X,Y,Z,W}:=g_{\mathcal{Z}}(R^\mathcal Z(X,Y)Z,W)\]
With the aid of Lemma \ref{lemma z1} and \eqref{eq cy 3}-\eqref{eq cy 4} it follows from straight forward calculations (for details we refer to \cite[Lemma 6.9]{JW}), that
\begin{align*}
R_{\nu,e_0,e_0,e_1}&=R_{\nu,e_0,e_0,e_2}=R_{\nu,e_0,\nu,e_1}=R_{\nu,e_0,\nu,e_2}=0,\\
R_{\nu,e_0,\nu,e_0}&=\frac{\alpha''(t)}{\alpha(t)},\\
R_{\nu,e_0,e_1,e_2}&=\frac{\alpha'(t)}{\beta(t)^2}a-\frac{\alpha(t)\beta'(t)}{\beta(t)^3}a.
\end{align*}
Plugging this into \eqref{eq spingeo curvature} and using \eqref{eq spingeo cliffordm onb}, we get
\begin{align*}
R^{\Sigma^+\mathcal{Z}}(\nu,e_0)\varphi&=\frac{1}{2}\left(R_{\nu,e_0,\nu,e_0}\nu\cdot e_0\cdot\varphi + R_{\nu,e_0,e_1,e_2}e_1\cdot e_2\cdot\varphi\right)\\
&=\frac{1}{2}\left(\frac{\alpha''(t)}{\alpha(t)}\nu\cdot e_0\cdot\varphi + \left(\frac{\alpha'(t)}{\beta(t)^2}a-\frac{\alpha(t)\beta'(t)}{\beta(t)^3}a\right)\underbrace{e_1\cdot e_2\cdot\varphi}_{=\nu\cdot e_0\cdot \varphi}\right)\\
&=\frac{1}{2}\left(\frac{\alpha''(t)}{\alpha(t)}+\frac{\alpha'(t)}{\beta(t)^2}a-\frac{\alpha(t)\beta'(t)}{\beta(t)^3}a\right)\nu\cdot e_0\cdot\varphi.
\end{align*}
From this, the first equation in Lemma \ref{lemma curv in cyl} directly follows. The second equation follows from
\[R^{\Sigma^+\mathcal{Z}}(\nu,e_i)\varphi=\frac{1}{2}\left(\frac{\beta''(t)}{\beta(t)}-\frac{1}{2}\frac{\alpha'(t)}{\beta(t)^2}a+\frac{1}{2}\frac{\alpha(t)\beta'(t)}{\beta(t)^3}a\right)\nu\cdot e_i\cdot\varphi,\]
$i=1,2$, which is shown with the same method as above.

\end{proof}

\subsection{$S^1$-invariant spinors}\label{s1invspinors}
For details concerning this section we refer to \cite{ABCollaps} and also \cite{Mor}. Define $(g_t)_{t\in I}$ by \eqref{eq ansatz for g_t}. The $S^1$-action on $S^3$ induces an $S^1$-action on $\textup{SO}(S^3,g_t)$ which lifts uniquely to an $S^1$-action on $\textup{Spin}(S^3,g_t)$ as follows: We use the fact that for every Riemannian metric on $S^3$  (respectively, $S^2$) there exists, up to equivalence of reductions, exactly one metric spin structure. Pulling back $\textup{Spin}(S^2,\beta(t)^2g_{FS})$ along the Hopf fibration and enlarging the structure group to $\textup{Spin}(3)$ we get the spin structure on $(S^3,g_t)$,
\[\textup{Spin}(S^3,g_t)=\pi^*(\textup{Spin}(S^2,\beta(t)^2g_{FS}))\times_{\textup{Spin}(2)}\textup{Spin}(3).\]
Now we define the $S^1$-action by 
\[[(x,\sigma),g]\cdot e^{is}:=[(x\cdot e^{is},\sigma),g]\]
for $(x,\sigma)\in\pi^*(\textup{Spin}(S^2,\beta(t)^2g_{FS}))\subset S^3\times \textup{Spin}(S^2,\beta(t)^2g_{FS})$, $g\in\textup{Spin}(3)$, and $e^{is}\in S^1\subset\mathbb{C}$. This action is the desired lift of the $S^1$-action on $\textup{SO}(S^3,g_t)$. Uniqueness follows from the fact that $\textup{Spin}(S^3,g_t)\cong S^3\times\textup{Spin}(3)$ and $\textup{SO}(S^3,g_t)\cong S^3\times SO(3,\mathbb{R})$ are connected.

This yields a $S^1$-action on $\Sigma(S^3,g_t)$. Spinors which are invariant under this action are called \textit{$S^1$-invariant}. Denote by $V(t)\subset\Gamma(\Sigma(S^3,g_t))$ the vector space of $S^1$-invariant spinors. For every $\varphi\in V(t)$ we have 
\begin{align}\label{eq s1 inv 1}
\nabla_K^t\varphi=\frac{1}{4}\frac{\alpha(t)}{\beta(t)^2}aK\cdot_t\varphi,
\end{align}
see \cite[Lemma 4.3.]{ABCollaps}.

The $S^1$-invariant spinors are in one-to-one correspondence to the spinors on the base manifold. To be more precise, by \cite[Lemma 4.4.]{ABCollaps} there is an isomorphism of vector spaces
\[Q=Q(t)\colon \Gamma(\Sigma(S^2,\beta(t)^2g_{FS}))\rightarrow V(t).\]
The following identities will be used later: For every vector field $X\in\Gamma(TS^2)$ and every spinor $\sigma\in\Gamma(\Sigma(S^2,\beta(t)^2g_{FS}))$
\begin{align}\label{eq s1 inv 2}
\nabla_{X^*}^tQ(\sigma)=Q(\nabla_X^t\sigma)-\frac{1}{4}\frac{\alpha(t)}{\beta(t)^2}aX^*\cdot_t Q(\sigma),
\end{align}
\begin{align}\label{eq s1 inv 3}
Q(X\cdot_t \sigma)=X^*\cdot_t Q(\sigma).
\end{align}
In \eqref{eq s1 inv 1}-\eqref{eq s1 inv 3} we denote by $\nabla^t$ the spinorial Levi-Civita connection on $\Sigma(S^3,g_t)$ and $\Sigma(S^2,\beta(t)^2g_{FS})$ respectively and ``$\cdot_t$'' is the clifford multiplication in the respective spinor bundles.

\subsection{A collapsing theorem}
Our first main result is the following theorem.
\begin{theorem}[Collapse]\label{main result 1} Let $\varepsilon>0$ and $\lambda\in\{\pm1\}$. Write $(g_0,\varphi_0):=(g^\varepsilon,Q(\sigma))$ for $\sigma$ a $\lambda$-Killing spinor on $(S^2,g_{FS}=\frac{1}{4}g_{S^2})$ such that $|\varphi_0|=1$. Then the solution of the spinor flow on $M=S^3$ with initial value $(g_0,\varphi_0)$ is given by \eqref{eq ansatz for g_t}-\eqref{eq ansatz for varphi_t} where $b=t_{max}$, $t_{max}\in(0,\infty]$ maximum time of existence (to the right), such that:
\begin{itemize}
\item[] If $a\lambda=2$, then:
	\begin{itemize}
	\item[$\bullet$] For $0<\varepsilon<\frac{2}{3}$ we have $t_{max}=\infty$, $\lim\limits_{t\rightarrow \infty}\alpha(t)=0$, and $\lim\limits_{t\rightarrow \infty}\beta(t)=:\beta_\infty>0$.
	\item[$\bullet$] For $\varepsilon\ge \frac{2}{3}$ we have $\lim\limits_{t\rightarrow t_{max}}\alpha(t)=\lim\limits_{t\rightarrow t_{max}}\beta(t)=0$.
	\item[$\bullet$] For $\varepsilon =\frac{2}{3}$ we have $t_{max}=12$, $\alpha(t)=\frac{2}{3}\beta(t)$, and  $\beta(t)=\frac{1}{6}\sqrt{36-3t}$.
	\item[$\bullet$] For $\varepsilon =1$ we have $t_{\max}=16$ and $\alpha(t)=\beta(t)=\frac{1}{4}\sqrt{16-t}$.
	\end{itemize}
\item[] If $a\lambda=-2$, then:
	\begin{itemize}
	\item[$\bullet$] For every $\varepsilon >0$ we have $t_{max}=\infty$, $\lim\limits_{t\rightarrow \infty}\alpha(t)=0$, and $\lim\limits_{t\rightarrow \infty}\beta(t)=:\beta_\infty>0$.
	\end{itemize}
\end{itemize}
Moreover, in any of the above cases the spinor flow preserves the class of $S^1$-invariant spinors which correspond to Killing spinors on $S^2$. More precisely,
\[\varphi_t=Q(\sigma_t)\]
for every $t\in I$ where $\sigma_t$ is a $\frac{\lambda}{\beta(t)}$-Killing spinor on $(S^2,\beta(t)^2g_{FS})$.
\end{theorem}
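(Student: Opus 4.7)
The plan is to follow the strategy outlined in the overview: insert the ansatz \eqref{eq ansatz for g_t}-\eqref{eq ansatz for varphi_t} into the spinor-flow equation, derive an explicit formula for $\nabla^t\varphi_t$, reduce the flow to an ODE system for $\alpha,\beta$, and then analyze this system case by case. The first thing to observe is that because $\varphi_t$ is defined by parallel transport in the generalized cylinder along the $\nu$-direction, $\frac{\partial}{\partial t}\varphi_t=0$ holds by construction; so the PDE in the spinor slot reduces to the algebraic identity $Q_2(g_t,\varphi_t)=0$, which one has to verify. Moreover, since $g_t$ and the lift of the $S^1$-action are compatible and $\varphi_0$ is $S^1$-invariant, the parallel transport preserves $S^1$-invariance, so $\varphi_t\in V(t)$ for all $t$ and the identities \eqref{eq s1 inv 1}--\eqref{eq s1 inv 3} are available at every time.

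The core technical step is to compute $\nabla^t_{e_i(t)}\varphi_t$ for $i=0,1,2$. I would start at $t=0$, where the Killing equation $\nabla^{S^2}\sigma=\lambda\,(\cdot)\cdot\sigma$ together with \eqref{eq s1 inv 1}--\eqref{eq s1 inv 3} gives $\nabla^0_K\varphi_0=\tfrac14 a\,K\cdot_0\varphi_0$ and $\nabla^0_{Y^*}\varphi_0=(\lambda-\tfrac14 a)\,Y^*\cdot_0\varphi_0$ for horizontal $Y^*$. To propagate these identities to arbitrary $t$ I would differentiate with respect to $\nu$ in the cylinder, commute $\nabla^{\Sigma^+(\mathcal{Z})}_\nu$ past the spatial covariant derivatives using Lemma~\ref{lemma curv in cyl}, and feed in Lemma~\ref{lemma cyl identi} to translate back and forth between $\nabla^{\Sigma(\mathcal{Z})}$ and $\nabla^t$ via the Weingarten map $W_t$. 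This should yield a first order linear ODE (in $t$) for the coefficients in $\nabla^t_{e_i(t)}\varphi_t = c_i(t)\,e_i(t)\cdot_t\varphi_t$; after comparison with \eqref{eq s1 inv 1} and \eqref{eq s1 inv 2} one expects
\[
\nabla^t_K\varphi_t=\tfrac14\tfrac{\alpha(t)}{\beta(t)^2}a\,K\cdot_t\varphi_t,\qquad
\nabla^t_{Y^*}\varphi_t=\bigl(\tfrac{\lambda}{\beta(t)}-\tfrac14\tfrac{\alpha(t)}{\beta(t)^2}a\bigr)\,Y^*\cdot_t\varphi_t,
\]
which is exactly the assertion $\varphi_t=Q(\sigma_t)$ with $\sigma_t$ a $\tfrac{\lambda}{\beta(t)}$-Killing spinor on $(S^2,\beta(t)^2g_{FS})$.

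With these formulas in hand, $|\nabla^{\Sigma(S^3,g_t)}\varphi_t|^2$ and the tensor $\nabla\varphi_t\otimes\nabla\varphi_t$ are diagonal in the frame $(e_0(t),e_1(t),e_2(t))$, with one eigenvalue in the vertical direction and another (common) eigenvalue in the horizontal directions. Using $|\varphi_t|^2\equiv1$ and the identities $(X\cdot\varphi,\varphi)=0$, $(X\cdot\varphi,Y\cdot\varphi)=g(X,Y)$, the symmetrized divergence term $\operatorname{div}_{g_t}T_{g_t,\varphi_t}$ and the connection Laplacian should both collapse to multiples of $g_t$ restricted to the vertical and horizontal subbundles. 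The Killing-type relation, combined with $\operatorname{tr}_{g_t}$-identities, should make $Q_2(g_t,\varphi_t)$ vanish identically (independently of $\alpha,\beta$). The remaining equation $\tfrac{\partial}{\partial t}g_t=Q_1(g_t,\varphi_t)$ then decouples, since $\partial_t g_t$ is diagonal in the same frame, into two scalar ODEs for $\alpha$ and $\beta$, whose right-hand sides are rational expressions in $\alpha,\beta,\lambda,a$ arising from the coefficients $c_i(t)$ above.

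The main obstacle is the qualitative analysis of this planar ODE system with initial condition $\alpha(0)=\sqrt{\varepsilon}$, $\beta(0)=1$. For $a\lambda=2$ one expects a dichotomy governed by the ratio $\alpha/\beta$: the quantity $\alpha-c\beta$ (for some explicit $c$, presumably $c=\tfrac{2}{3}$) should be monotone, making $\varepsilon=\tfrac{2}{3}$ a separatrix; below it, $\beta$ stays bounded away from $0$ while $\alpha\to0$, and above it both collapse to $0$ in finite time. For the borderline $\varepsilon=\tfrac{2}{3}$ and for $\varepsilon=1$ the ansatz $\alpha=c\beta$ resp.\ $\alpha=\beta$ is consistent and reduces the system to a single ODE which integrates to the stated square-root formulas. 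For $a\lambda=-2$ the same monotone-quantity argument gives $\alpha\to0$ and $\beta\to\beta_\infty>0$ for every $\varepsilon>0$. The non-linearity and the need for sharp long-time control on $\beta$ (to rule out both blow-down to $0$ and blow-up to $\infty$) is the step most likely to require careful estimates.
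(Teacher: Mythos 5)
Your reduction steps coincide with the paper's: the cylinder/parallel-transport ansatz, the propagation of the Killing-type identities for $\nabla^t\varphi_t$ via $\nabla^{\Sigma^+\mathcal{Z}}_\nu$ and the curvature lemma, the vanishing of $Q_2$ and $\partial_t\varphi_t$, and the reduction of $\partial_t g_t=Q_1(g_t,\varphi_t)$ to a planar ODE system are exactly Lemmas \ref{lemma eq for varphi_t} and \ref{lemma q-terms 1}. But there are two concrete problems. First, the normalization: the paper's convention (forced by the explicit solutions in the statement, e.g. $\varepsilon=\tfrac23$ gives $\alpha(t)=\tfrac23\beta(t)$ with $\beta(0)=1$, hence $\alpha(0)=\tfrac23$) is $\alpha(0)=\varepsilon$, $\beta(0)=1$, not $\alpha(0)=\sqrt{\varepsilon}$; with your initial condition the dichotomy would sit at $\varepsilon=\tfrac49$ and the stated formulas for $\varepsilon=\tfrac23,1$ would not come out. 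Second, and more substantively, the long-time analysis — which is where the actual content of the case distinctions lies — is only asserted. The quantity $\alpha-\tfrac23\beta$ is \emph{not} monotone along all trajectories for $a\lambda=2$: its derivative is $\tfrac{\alpha}{\beta^2}\bigl(\tfrac{27}{64}r^2-\tfrac{21}{32}r+\tfrac14\bigr)$ with $r=\alpha/\beta$, which changes sign at $r=\tfrac23$ and $r=\tfrac89$; so one must first verify that the lines $\alpha=\tfrac23\beta$ and $\alpha=\beta$ are invariant and then work inside the resulting invariant cones. Even granted monotonicity there, that single quantity does not by itself give $t_{\max}=\infty$, $\alpha\to0$, convergence of $\beta$ to a positive limit (case $\varepsilon<\tfrac23$), or $\beta\to0$ (case $\varepsilon>\tfrac23$): one still needs confinement in compact regions away from $\{\beta=0\}$, monotonicity of $\alpha$, and a limit-set or direct estimate argument. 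The paper supplies precisely this with the explicit forward-invariant compact sets $K$, $K_1$, $K_2$, $K_3$ and the Poincaré--Bendixson theorem (after checking the only critical points are $(0,k)$ and there are no periodic orbits). Also, your claim of \emph{finite-time} collapse for all $\varepsilon>\tfrac23$ is neither part of the theorem nor established by your argument.

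Apart from this, your treatment of the last statement is a legitimate alternative to the paper's. You argue that the $S^1$-action acts by isometries of $g_\mathcal{Z}$ and commutes with the spinorial parallel transport, so $\varphi_t\in V(t)$ and the identities \eqref{eq s1 inv 2}--\eqref{eq s1 inv 3} convert Lemma \ref{lemma eq for varphi_t} into the statement that $\sigma_t$ is a $\tfrac{\lambda}{\beta(t)}$-Killing spinor; the paper instead constructs a spin-diffeomorphism from the lifted $S^1$-action and invokes diffeomorphism invariance of the spinor flow together with uniqueness of solutions. Your route is shorter, but it still requires the point the paper makes in Section \ref{s1invspinors}: the $S^1$-action must be lifted to the spin structure of the cylinder compatibly with the chosen lifts on each $\textup{Spin}(S^3,g_t)$ (uniqueness of the lift on a connected total space), otherwise ``preserves $S^1$-invariance'' is not meaningful.
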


\begin{remark}
In the case $a\lambda=2$ the result can be interpreted as follows: If we start with fibers that are sufficiently short ($\varepsilon <\frac{2}{3}$), then the $S^1$-fiber converges to a point under the spinor flow ($\alpha\to 0$), but the complement does not ($\beta\to\beta_\infty>0$). In that sense the $S^1$-principal bundle $S^3$ collapses against its base $S^2$. If we start with fibers that are too long ($\varepsilon >\frac{2}{3}$), then $S^3$ converges to a point under the spinor flow.

In the case $a\lambda=-2$ the collapse is independent of the length of the fibers.
\end{remark}

Now we carry out the steps mentioned in the introduction to prove Theorem \ref{main result 1}.

\begin{lemma}\label{lemma eq for varphi_t}Choose $(g_0,\varphi_0)$ as in Theorem \ref{main result 1} and define $(g_t,\varphi_t)_{t\in I}$ by \eqref{eq ansatz for g_t}-\eqref{eq ansatz for varphi_t}. Then, for every $t\in I$ and every horizontal vector field $Y\in \Gamma(TS^3)$, it holds that
\begin{align*}
\nabla^t_K\varphi_t&=\frac{1}{4}\frac{\alpha(t)}{\beta(t)^2}aK\cdot_t\varphi_t,\\
\nabla^t_Y\varphi_t&=(\frac{1}{\beta(t)}\lambda-\frac{1}{4}\frac{\alpha(t)}{\beta(t)^2}a)Y\cdot_t\varphi_t
\end{align*} 
where $\nabla^t=\nabla^{\Sigma(S^3,g_t)}$ and  ``$\cdot_t$'' is the Clifford multiplication in $\Sigma(S^3,g_t)$.
\end{lemma}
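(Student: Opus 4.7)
The plan is to verify the two identities at $t=0$ and then propagate them along the generalized cylinder using Lemma \ref{lemma curv in cyl}. Setting $c_1(t) := \tfrac14 \tfrac{\alpha(t)}{\beta(t)^2} a$ and $c_2(t) := \tfrac{\lambda}{\beta(t)} - \tfrac14 \tfrac{\alpha(t)}{\beta(t)^2} a$, I would introduce the error spinors $A_K(t) := \nabla^t_K \varphi_t - c_1(t)\, K\cdot_t\varphi_t$ and, for a (time-independent) horizontal vector field $Y\in\Gamma(TS^3)$, $A_Y(t) := \nabla^t_Y \varphi_t - c_2(t)\, Y\cdot_t\varphi_t$, and show that both vanish identically.

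For the initial data: since $g_0 = g^{\varepsilon}$ forces $\alpha(0) = \varepsilon$ and $\beta(0) = 1$, the $S^1$-invariance of $\varphi_0 = Q(\sigma)$ together with \eqref{eq s1 inv 1} immediately yields $A_K(0) = 0$. For horizontal $Y = X^\ast$, the combination of \eqref{eq s1 inv 2}, the Killing equation $\nabla^{g_{FS}}_X \sigma = \lambda\, X \cdot \sigma$, and the intertwining property \eqref{eq s1 inv 3} gives $\nabla^0_{X^\ast} \varphi_0 = (\lambda - \tfrac14 \varepsilon a)\, X^\ast \cdot_0 \varphi_0 = c_2(0)\, Y \cdot_0 \varphi_0$, so $A_Y(0) = 0$.

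To propagate, I would lift everything to the cylinder, where $\varphi$ is $\nu$-parallel by construction. A direct computation of $\dot g_t$ from the ansatz \eqref{eq ansatz for g_t} combined with the second formula of the lemma in Section 2.2 gives $W_t(K) = -\tfrac{\alpha'(t)}{\alpha(t)} K$ and $W_t(Y) = -\tfrac{\beta'(t)}{\beta(t)} Y$ for horizontal $Y$. Using Lemma \ref{lemma cyl identi} and the identity $\nu \cdot X \cdot \varphi = X \cdot_t \varphi_t$, the statement $A_K \equiv 0$ translates on the cylinder into
\[
\nabla^{\Sigma^+\mathcal{Z}}_{\tilde K} \varphi = F(t)\, \nu \cdot \tilde K \cdot \varphi, \qquad F(t) := c_1(t) + \tfrac{\alpha'(t)}{2\alpha(t)},
\]
where $\tilde K$ is the time-independent extension of $K$ to $\mathcal{Z}$; similarly, $A_Y \equiv 0$ becomes the analogous identity with $G(t) := c_2(t) + \tfrac{\beta'(t)}{2\beta(t)}$ in place of $F$.

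The main step is to show that the corresponding difference $\tilde A_K := \nabla^{\Sigma^+\mathcal{Z}}_{\tilde K} \varphi - F(t)\, \nu \cdot \tilde K \cdot \varphi$ is $\nu$-parallel. Since $[\nu, \tilde K] = 0$ by \eqref{eq cy 5} and $\nabla^{\Sigma^+\mathcal{Z}}_\nu \varphi = 0$ by construction, differentiating the first summand along $\nu$ produces exactly the curvature $R^{\Sigma^+\mathcal{Z}}(\nu, \tilde K)\varphi$ supplied by Lemma \ref{lemma curv in cyl}; using $\nabla^{\mathcal{Z}}_\nu \nu = 0$ and $\nabla^{\mathcal{Z}}_\nu \tilde K = \tfrac{\alpha'}{\alpha} \tilde K$ from \eqref{eq cy 6}, differentiating the second summand yields $(F' + \tfrac{\alpha'}{\alpha} F)\, \nu \cdot \tilde K \cdot \varphi$. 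Substituting the definition of $F$ reduces the required identity to $c_1' + \tfrac{\alpha'}{\alpha} c_1 = \tfrac12 (\tfrac{\alpha'}{\beta^2} a - \tfrac{\alpha \beta'}{\beta^3} a)$, which is an immediate check; the analogous computation for $\tilde A_Y$ uses the second formula in Lemma \ref{lemma curv in cyl}. Once $\nabla^{\Sigma^+\mathcal{Z}}_\nu \tilde A \equiv 0$ is established, the vanishing initial value forces $\tilde A \equiv 0$ by uniqueness of parallel sections along the $\nu$-curves, completing the proof. The only real obstacle is the algebraic bookkeeping of the curvature and derivative contributions; they fit together precisely because the coefficients $c_1$ and $c_2$ have been calibrated to match the curvature of the cylinder.
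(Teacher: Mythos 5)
Your proposal is correct and follows essentially the same route as the paper: check the identities at $t=0$ via \eqref{eq s1 inv 1}--\eqref{eq s1 inv 3} and the Killing equation, lift to the generalized cylinder where $\nabla^{\Sigma^+\mathcal Z}_\nu\varphi=0$, and propagate using Lemma \ref{lemma cyl identi}, Lemma \ref{lemma curv in cyl} and uniqueness for the resulting ODE along the $\nu$-curves. The only (harmless) difference is that you work with the time-independent fields $K$ and $Y$, so your error spinors come out exactly $\nu$-parallel, whereas the paper uses the rescaled frame $e_0(t)=\frac{1}{\alpha(t)}K$ (and $\tilde Y_t=\frac{1}{\beta(t)}Y$) and obtains the linear equation $\nabla^{\Sigma^+\mathcal Z}_\nu A=-\frac{\alpha'(t)}{\alpha(t)}A$; both coefficient checks you indicate do hold.
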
 
\begin{proof}
First of all, from \eqref{eq s1 inv 1}-\eqref{eq s1 inv 3} we get 
\begin{align*}
\nabla^0_K\varphi_0&=\frac{1}{4}\varepsilon aK\cdot_0\varphi_0,\\
\nabla^0_Y\varphi_0&=(\lambda-\frac{1}{4}\varepsilon a)Y\cdot_0\varphi_0,
\end{align*}
for every horizontal vector field $Y\in\Gamma(TS^3)$. From \eqref{eq cy 6} we get $\nabla^\mathcal{Z}_\nu e_0=0$ and \eqref{eq cy 5} yields $[\nu,e_0](t,p)=-\frac{\alpha'(t)}{\alpha(t)}e_0(t,x)$. In the following we write $\nabla^{\Sigma^+\mathcal Z}=\nabla^{\Sigma^+{(\mathcal{Z},g_\mathcal{Z})}}$. Using Lemma \ref{lemma cyl identi} and Lemma \ref{lemma curv in cyl} it follows that
\begin{align*}
\nabla&^{\Sigma^+\mathcal{Z}}_\nu\left(\nabla^{t}_{e_0(t)}\varphi_t -\frac{1}{4}\frac{\alpha(t)}{\beta(t)^2}ae_0(t)\cdot_t\varphi_t\right)\\ &=\nabla^{\Sigma^+\mathcal{Z}}_\nu\left(\nabla^{\Sigma^+\mathcal{Z}}_{e_0}\varphi+\frac{1}{2}\nu\cdot \underbrace{W_t(e_0(t))}_{=-\frac{\alpha'(t)}{\alpha(t)}e_0(t)}\cdot\varphi\right)- \frac{1}{4}a\nabla^{\Sigma^+\mathcal{Z}}_\nu\left(\frac{\alpha(t)}{\beta(t)^2}\nu\cdot e_0\cdot\varphi\right)\\
&=\nabla^{\Sigma^+\mathcal{Z}}_\nu\nabla^{\Sigma^+\mathcal{Z}}_{e_0}\varphi -\frac{1}{2}L_\nu(\frac{\alpha'(t)}{\alpha(t)})\nu\cdot e_0\cdot\varphi-\frac{1}{4}aL_\nu(\frac{\alpha(t)}{\beta(t)^2})\nu\cdot e_0\cdot \varphi\\
&=R^{\Sigma^+\mathcal{Z}}(\nu,e_0)\varphi + \nabla^{\Sigma^+\mathcal{Z}}_{e_0}\underbrace{\nabla^{\Sigma^+\mathcal{Z}}_\nu\varphi}_{=0} + \nabla^{\Sigma^+\mathcal{Z}}_{[\nu,e_0]}\varphi -\left(\frac{1}{2}L_\nu(\frac{\alpha'(t)}{\alpha(t)})+\frac{1}{4}aL_\nu(\frac{\alpha(t)}{\beta(t)^2})\right)\nu\cdot e_0\cdot \varphi\\
&=-\frac{\alpha'(t)}{\alpha(t)}\nabla^{\Sigma^+\mathcal{Z}}_{e_0}\varphi + R^{\Sigma^+\mathcal{Z}}(\nu,e_0)\varphi - \left(\frac{1}{2}(\frac{\alpha'(t)}{\alpha(t)})'+\frac{1}{4}a(\frac{\alpha(t)}{\beta(t)^2})'\right)\nu\cdot e_0\cdot\varphi\\
&=-\frac{\alpha'(t)}{\alpha(t)}\nabla^{\Sigma^+\mathcal{Z}}_{e_0}\varphi + \frac{1}{2}\left(\frac{\alpha''(t)}{\alpha(t)}+\frac{\alpha'(t)}{\beta(t)^2}a-\frac{\alpha(t)\beta'(t)}{\beta(t)^3}a\right)\nu\cdot e_0\cdot\varphi\\
&\hspace{5em}-\left(\frac{1}{2}\frac{\alpha''(t)}{\alpha(t)}-\frac{1}{2}\frac{\alpha'(t)^2}{\alpha(t)^2}+\frac{1}{4}a\frac{\alpha'(t)}{\beta(t)^2}-\frac{1}{2}a\frac{\alpha(t)\beta'(t)}{\beta(t)^3}\right) \nu\cdot e_0\cdot\varphi\\
&=-\frac{\alpha'(t)}{\alpha(t)}\nabla^{\Sigma^+\mathcal{Z}}_{e_0}\varphi + \left(\frac{1}{4}\frac{\alpha'(t)}{\beta(t)^2}a+\frac{1}{2}\frac{\alpha'(t)^2}{\alpha(t)^2}\right)\nu\cdot e_0\cdot\varphi\\
&=-\frac{\alpha'(t)}{\alpha(t)}\left(\nabla^t_{e_0(t)}\varphi_t+\frac{1}{2}\frac{\alpha'(t)}{\alpha(t)}\nu\cdot e_0\cdot\varphi\right)+ \left(\frac{1}{4}\frac{\alpha'(t)}{\beta(t)^2}a+\frac{1}{2}\frac{\alpha'(t)^2}{\alpha(t)^2}\right)\nu\cdot e_0\cdot\varphi\\
&=-\frac{\alpha'(t)}{\alpha(t)}\left(\nabla^{t}_{e_0(t)}\varphi_t -\frac{1}{4}\frac{\alpha(t)}{\beta(t)^2}ae_0(t)\cdot_t\varphi_t\right).
\end{align*}
We have shown
	\begin{center}
		$ \left\{
		\begin{array}{lll}
		\nabla^{\Sigma^+\mathcal{Z}}_\nu&\left(\nabla^{t}_{e_0(t)}\varphi_t -\frac{1}{4}\frac{\alpha(t)}{\beta(t)^2}ae_0(t)\cdot_t\varphi_t\right)&=-\frac{\alpha'(t)}{\alpha(t)}\left(\nabla^{t}_{e_0(t)}\varphi_t -\frac{1}{4}\frac{\alpha(t)}{\beta(t)^2}ae_0(t)\cdot_t\varphi_t\right),\\
		&\nabla^{0}_{e_0(0)}\varphi_0-\frac{1}{4}\frac{\alpha(0)}{\beta(0)^2} ae_0(0)\cdot_0\varphi_0&=0.
		\end{array}
		\right.$
	\end{center}
This differential equation with initial value has zero as unique solution, so
\[\nabla^{t}_{e_0(t)}\varphi_t -\frac{1}{4}\frac{\alpha(t)}{\beta(t)^2}ae_0(t)\cdot_t\varphi_t=0\]
on $\mathcal{Z}$.

To prove the second equation, we define $\tilde{Y}_t\in \Gamma(TS^3)$ by $\tilde{Y}_t(x):=\frac{1}{\beta(t)}Y(x)$. Using the same ideas as above, we get
\[\nabla^t_{\widetilde{Y}_t}\varphi_t-\left(\frac{1}{\beta(t)}\lambda-\frac{1}{4}\frac{\alpha(t)}{\beta(t)^2}a\right)\widetilde{Y}_t\cdot_t\varphi_t=0.\]
\end{proof}

Using the previous lemma, a straightforward calculation yields the following lemma. (Details can be found in \cite[Lemma 6.14]{JW}.)
\begin{lemma}\label{lemma q-terms 1}Let $(g_t,\varphi_t)_{t\in I}$ as in Lemma \ref{lemma eq for varphi_t}. For every $t\in I$ it holds that
	\begin{align*}
		Q_1(g_t,\varphi_t)(e_0(t),e_0(t))&=-\frac{9}{64}\frac{\alpha(t)^2}{\beta(t)^4}a^2+\frac{1}{2}\frac{\alpha(t)}{\beta(t)^3}a\lambda -\frac{1}{2}\frac{1}{\beta(t)^2}\lambda^2,\\
		Q_1(g_t,\varphi_t)(e_1(t),e_1(t))&=\frac{3}{64}\frac{\alpha(t)^2}{\beta(t)^4}a^2-\frac{1}{8}\frac{\alpha(t)}{\beta(t)^3}a\lambda,\\
		Q_1(g_t,\varphi_t)(e_2(t),e_2(t))&=Q_1(g_t,\varphi_t)(e_1(t),e_1(t)),\\
		Q_1(g_t,\varphi_t)(e_i(t),e_j(t))&=0 \text{ for }i\neq j,\\
		Q_2(g_t,\varphi_t)&=0.
	\end{align*}
\end{lemma}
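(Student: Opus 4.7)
The plan is to substitute the identities of Lemma \ref{lemma eq for varphi_t} into the formulas for $Q_1$ and $Q_2$ and carry out the computations in the orthonormal frame $(e_0(t),e_1(t),e_2(t))$. Abbreviate
\[\mu_0:=\frac{1}{4}\frac{\alpha(t)}{\beta(t)^2}a,\qquad \mu:=\frac{\lambda}{\beta(t)}-\frac{1}{4}\frac{\alpha(t)}{\beta(t)^2}a;\]
both are constant functions on $S^3$ for each fixed $t$. Lemma \ref{lemma eq for varphi_t} then reads $\nabla^t_{e_0(t)}\varphi_t=\mu_0\,e_0(t)\cdot_t\varphi_t$ and $\nabla^t_{e_i(t)}\varphi_t=\mu\,e_i(t)\cdot_t\varphi_t$ for $i=1,2$. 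Using $|\varphi_t|=1$ together with $(X\cdot_t\varphi_t,Y\cdot_t\varphi_t)=g_t(X,Y)$, one immediately obtains $|\nabla^{\Sigma(S^3,g_t)}\varphi_t|^2=\mu_0^2+2\mu^2$ and the diagonal form $(\nabla^{\Sigma(S^3,g_t)}\varphi_t\otimes\nabla^{\Sigma(S^3,g_t)}\varphi_t)(e_i(t),e_j(t))=\mu_i^2\delta_{ij}$ with $\mu_1=\mu_2=\mu$. This already pins down the first and third summands of $Q_1$ on the frame.

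For $Q_2$ a short Bochner-type calculation suffices. Since $\mu_0,\mu$ are constant on $S^3$, the Leibniz rule together with $e_i(t)\cdot_t e_i(t)\cdot_t\varphi_t=-\varphi_t$ gives
\[\nabla^t_{e_i(t)}\nabla^t_{e_i(t)}\varphi_t=\mu_i\bigl(\nabla^t_{e_i(t)}e_i(t)\bigr)\cdot_t\varphi_t-\mu_i^2\varphi_t.\]
By Lemma \ref{lemma z1} each $\nabla^t_{e_i(t)}e_i(t)$ is horizontal (and zero for $i=0$), so applying the horizontal formula from Lemma \ref{lemma eq for varphi_t} yields $\nabla^t_{\nabla^t_{e_i(t)}e_i(t)}\varphi_t=\mu\bigl(\nabla^t_{e_i(t)}e_i(t)\bigr)\cdot_t\varphi_t$, and these terms cancel the first-order contributions in the connection Laplacian. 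Thus $(\nabla^{\Sigma(S^3,g_t)})^*\nabla^{\Sigma(S^3,g_t)}\varphi_t=(\mu_0^2+2\mu^2)\varphi_t=|\nabla^{\Sigma(S^3,g_t)}\varphi_t|^2\varphi_t$, so $Q_2(g_t,\varphi_t)=0$.

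The only non-trivial ingredient left for $Q_1$ is $\textup{div}_{g_t}T_{g_t,\varphi_t}$. I would first evaluate the pretensor $S(X,Y,Z):=(X\cdot_t Y\cdot_t\varphi_t+g_t(X,Y)\varphi_t,\nabla^t_Z\varphi_t)$ on the frame. The identities $e_i(t)\cdot_t e_i(t)\cdot_t\varphi_t=-\varphi_t$ and $(e_\ell(t)\cdot_t\varphi_t,\varphi_t)=0$ force $S(e_i(t),e_j(t),e_k(t))=0$ whenever two of the three indices coincide. For $\{i,j,k\}=\{0,1,2\}$ pairwise distinct, the three-dimensional volume-element identity \eqref{eq spingeo cliffordm onb} gives $e_0(t)\cdot_t e_1(t)\cdot_t e_2(t)\cdot_t\varphi_t=-\varphi_t$, hence $e_i(t)\cdot_t e_j(t)\cdot_t\varphi_t=\pm e_k(t)\cdot_t\varphi_t$ with sign determined by the permutation; combined with $(e_k(t)\cdot_t\varphi_t,e_\ell(t)\cdot_t\varphi_t)=\delta_{k\ell}$ this expresses every non-zero value of $S$ as $\pm\mu_k$. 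Symmetrization in the last two slots then gives $T_{g_t,\varphi_t}$ explicitly.

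Finally, I would take the divergence using the Christoffel data of Lemma \ref{lemma z1}; once more the functions $\mu_0,\mu$ are constant on $S^3$, so only the Christoffel contributions survive, and each carries a factor of $\frac{1}{2}\frac{\alpha(t)}{\beta(t)^2}a$. Collecting the terms gives $\textup{div}_{g_t}T_{g_t,\varphi_t}$ as a frame-diagonal $(0,2)$-tensor whose entries are polynomials in $\alpha(t)/\beta(t)^2$, $\lambda/\beta(t)$ and $a$. Assembling $Q_1=-\frac{1}{4}|\nabla^{\Sigma(S^3,g_t)}\varphi_t|^2g_t-\frac{1}{4}\textup{div}_{g_t}T_{g_t,\varphi_t}+\frac{1}{2}\nabla^{\Sigma(S^3,g_t)}\varphi_t\otimes\nabla^{\Sigma(S^3,g_t)}\varphi_t$ then produces the claimed expressions. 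The main obstacle is the combinatorial bookkeeping in this last step: every individual Clifford or Christoffel manipulation is routine, but many cross terms have to be tracked and repeatedly simplified using the volume-element identity in dimension three, which is precisely why the paper defers the detailed computation to \cite[Lemma 6.14]{JW}. A useful consistency check on the final answer is that $\textup{tr}_{g_t}Q_1(g_t,\varphi_t)=-\frac{1}{4}|\nabla^{\Sigma(S^3,g_t)}\varphi_t|^2$, in line with the general integrated identity used in Section \ref{sect sflow} to show that the volume is preserved by the normalized flow.
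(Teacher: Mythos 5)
Your proposal follows essentially the same route as the paper: substitute the derivative formulas of Lemma \ref{lemma eq for varphi_t} into $Q_1$ and $Q_2$ and compute in the frame $(e_0(t),e_1(t),e_2(t))$, which is exactly the ``straightforward calculation'' the paper defers to \cite[Lemma 6.14]{JW}; your evaluation of $|\nabla^{\Sigma(S^3,g_t)}\varphi_t|^2$, of $\nabla\varphi_t\otimes\nabla\varphi_t$, the Bochner-type argument for $Q_2=0$, and the observation that $S$ is nonzero only on pairwise distinct indices are all correct. Note only that in $\textup{div}_{g_t}T_{g_t,\varphi_t}$ the Christoffel symbols of the base $(S^2,\beta(t)^2g_{FS})$ do enter a priori (they drop out solely because the relevant components of $T$ vanish or cancel), so the final bookkeeping — the one place the coefficients $-\tfrac{9}{64}$, $\tfrac{3}{64}$, etc.\ actually come from, and where the sign and slot conventions for $\textup{div}$ from \cite{AWWsflow} must be fixed with care — still has to be carried out as you indicate.
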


\begin{proof}[Proof of Theorem \ref{main result 1}]
First of all, we have 
\[\frac{\partial}{\partial t}\varphi_t=\nabla^{\Sigma^+(\mathcal{Z},g_\mathcal{Z})}_\frac{\partial}{\partial t}\varphi =0=Q_2(g_t,\varphi_t).\]	
Moreover, $|\varphi_t|=1$ for all $t\in I$ follows from the fact that $\nabla^{\Sigma^+(\mathcal{Z},g_\mathcal{Z})}$ is a metric connection. From Lemma \ref{lemma q-terms 1} we deduce that $\frac{\partial}{\partial t}g_t=Q_1(g_t,\varphi_t)$ with $g_0=g^\varepsilon$ holds iff $(\alpha,\beta)$ is the solution of the following system of two non-linear ordinary differential equations:
\begin{align*}
\alpha'(t)&=-\frac{9}{128}\frac{\alpha(t)^3}{\beta(t)^4}a^2+\frac{1}{4}\frac{\alpha(t)^2}{\beta(t)^3}a\lambda -\frac{1}{4}\frac{\alpha(t)}{\beta(t)^2}\lambda^2,\\
\beta'(t)&=\frac{3}{128}\frac{\alpha(t)^2}{\beta(t)^3}a^2-\frac{1}{16}\frac{\alpha(t)}{\beta(t)^2}a\lambda,\\
\alpha(0)&=\varepsilon,\\
\beta(0)&=1.
\end{align*}
Let $F\colon U:=\mathbb{R}^2\setminus\{(x,y)\in\mathbb{R}^2|\text{ }y=0\}\rightarrow \mathbb{R}^2$ be the vector field associated to that system, i.e.
\[F\begin{pmatrix}
x \\ y
\end{pmatrix}=\begin{pmatrix}
-\frac{9}{128}\frac{x^3}{y^4}a^2+\frac{1}{4}\frac{x^2}{y^3}a\lambda -\frac{1}{4}\frac{x}{y^2}\lambda^2 \\ 
\frac{3}{128}\frac{x^2}{y^3}a^2-\frac{1}{16}\frac{x}{y^2}a\lambda
\end{pmatrix},\]
see figure \ref{fig plot of F from main result 1}.

\begin{figure}[]
	\subfigure[$F$ for $a\lambda=2$ with integral curves. The integral curves with starting point $(1,1)$ and $(\frac{2}{3},1)$ are highlighted.]{\includegraphics[width=0.40\textwidth]{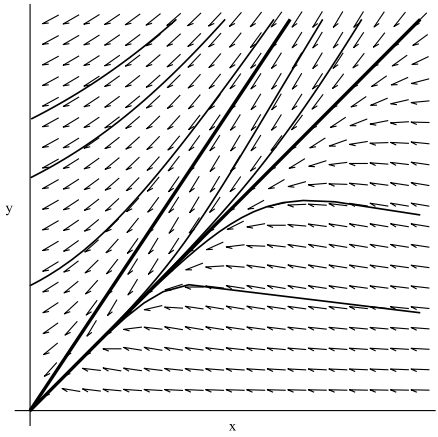}}\hfill
	\subfigure[$F$ for $a\lambda=-2$ with integral curves]{\includegraphics[width=0.40\textwidth]{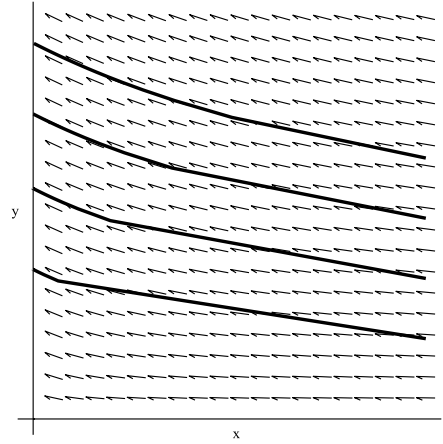}}\\
	\caption{Plots of the vector field $F$ of Theorem \ref{main result 1}.}
	\label{fig plot of F from main result 1}
\end{figure}
Let $c=(x,y)\colon J\rightarrow U$, $J\subset\mathbb{R}$ interval, be an integral curve of $F$. If there exists $t\in J$ such that $x(t)\neq 0$, then $c(J)$ lies in one quadrant of $\mathbb{R}^2$. Using
\[x'(t)=\frac{x(t)}{y(t)^2}\left(-\frac{9}{128}\left(\frac{x(t)}{y(t)}a\lambda\right)^2+\frac{1}{4}\left(\frac{x(t)}{y(t)}a\lambda\right) -\frac{1}{4}\right)\]
and $-\frac{9}{128}z^2+\frac{1}{4}z -\frac{1}{4}<0$ for all $z\in\mathbb{R}$, we get that $x(t)$ is either strictly decreasing or strictly increasing (depending on in which quadrant $c(J)$ lies). It follows that the critical points of $F$ are precisely the points $(0,k)$ for $k\neq0$.

Let us now prove the case $a\lambda=-2$. First we show that the integral curves of $F$ remain in certain compact subsets of $U$. To that end, let
\[K(v,w):=\{(x,y)\in\mathbb{R}^2\text{ }|\text{ }0\le x\le v, \text{ } w\le y \le w+v-x\}.\]
If $c$ is an integral curve of $F$ as above with $x(l),y(l)>0$ for some $l\in J$, then $c(t)\in K(c(l))$ for all $t\in J$ with $t\ge l$. The idea to prove that is as follows: For every boundary point $(v,w)\in \partial K(c(l))$ with $v>0$ the vector $F(v,w)$ points inside $K(c(l))$. Then the integral curve $c$ can't leave $K(c(l))$ since its movement is prescribed by $F$.

Let $c\colon J\rightarrow U$ be a maximal integral curve of $F$ with $c(0)=(\varepsilon,1)$, $\varepsilon>0$, and $J=[0,t_{max})$. Then $c(J)\subset K(c(0))\subset U$. Therefore, we have $t_{max}=\infty$. Using the Poincaré-Bendixson theorem (we use the version from \cite{CodLevODE}) and observing that $F$ has no periodic orbits, we get a sequence $(t_n)_{n\in \mathbb{N}}$ such that $t_n\ge 0$, $t_n\to \infty$,  and $c(t_n)\to p$ for $n\to \infty$ where $p$ is a critical point of $F$. So we have $p=(0,k)$ for some $k>0$. It follows that
\[\lim\limits_{t\to \infty}c(t)=(0,k),\]
since for every $n\in\mathbb{N}$ we have $c([t_n,\infty))\subset K(c(t_n))$. This proves Theorem \ref{main result 1} in the case $a\lambda =-2$.

The case $a\lambda=2$ can be treated with the same methods, i.e. by showing that integral curves remain in certain compact sets and using the Poincaré-Bendixson theorem. This time, however, we have to consider three different cases depending on the value of $\varepsilon >0$. We briefly outline the proof in this case. Define
\begin{align*}
	K_1(v,w)&:=\{(x,y)\in\mathbb{R}^2\text{ }|\text{ }0\le x\le v,\text{ }\frac32x+w-\frac32v\le y\le w\} \text{ for } 0<v<\frac23 w,\\
	K_2(v)&:=\{(x,y)\in\mathbb{R}^2\text{ }|\text{ }0\le x\le v,\text{ } x\le y\le\frac32 x\}\text{ for } v>0,\\
	K_3(v,w)&:=\{(x,y)\in\mathbb{R}^2\text{ }|\text{ }0\le x\le v, \frac{w}{v}x\le y\le x\}\text{ for } 0<w<v,\\
\end{align*}
see figure \ref{K123}.

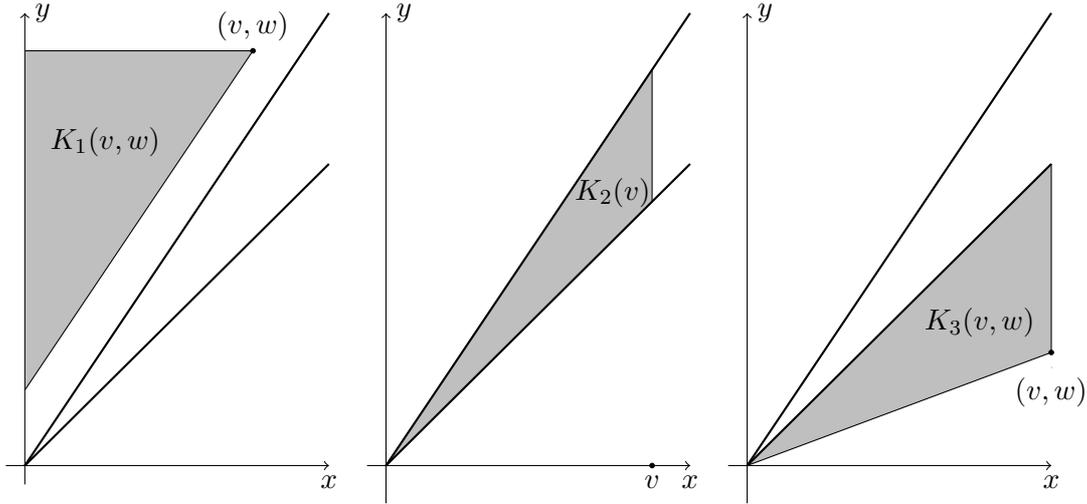
\begin{figure}\mbox{}\hfill
\begin{tikzpicture}
\path[fill=lightgray] (-0.25,1) .. controls (1.75,4).. (2.75,5.5) .. controls (1.75,5.5).. (-0.25,5.5) .. controls (-0.25,3).. (-0.25,1);
\draw [->] (-0.5,0)-- (3.75,0) node[below] {$x$};
\draw [->] (-0.25,-0.25)--(-0.25,6)node[right]{$y$};
\draw[thick] (-0.25,0) -- (3.75,6);
\draw[thick] (-0.25,0) -- (3.75,4);
\draw (-0.25,1) -- (2.75,5.5);
\draw (-0.25,5.5) -- (2.75,5.5);
\draw [fill=black] (2.75,5.5) circle (0.03)node[above] {$(v,w)$};
\draw (-0.05,4.3) node[right] {$K_1(v,w)$};

\draw [->] (4.25,0)-- (8.5,0) node[below] {$x$};
\draw [->] (4.5,-0.5)--(4.5,6)node[right]{$y$};
\path[fill=lightgray] (4.5,0) .. controls (5.5,1).. (8,3.5) .. controls (8,4).. (8,10.5/2) .. controls (5.5,1.5).. (4.5,0);
\draw[thick] (4.5,0) -- (8.5,6);
\draw[thick] (4.5,0) -- (8.5,4);
\draw (8,3.5) -- (8,10.5/2);
\draw (6.85,3.62) node[right] {$K_2(v)$};
\draw [fill=black] (8,0) circle (0.03)node[below] {$v$};

\draw [->] (9,0)-- (13.25,0) node[below] {$x$};
\draw [->] (9.25,-0.5)--(9.25,6)node[right]{$y$};
\path[fill=lightgray] (9.25,0) .. controls (11.25,6/8).. (13.25,1.5) .. controls (13.25,2).. (13.25,4) .. controls (12.25,3).. (9.25,0);
\draw[thick] (9.25,0) -- (13.25,6);
\draw[thick] (9.25,0) -- (13.25,4);
\draw (9.25,0)-- (13.25,1.5);
\draw (13.25,1.5) -- (13.25,4);
\draw (11.45,1.9) node[right] {$K_3(v,w)$};
\draw [fill=black] (13.25,1.5) circle (0.03);
\draw [fill=black] (13.25,1.3) circle (0.00)node[below] {$(v,w)$};
\end{tikzpicture}
	\hfill\mbox{}
	\caption{The sets $K_1(v,w)$, $K_2(v)$, and $K_3(v,w)$ together with the integral curves of $F$ through $(1,1)$ and $(\frac23,1)$ for $a\lambda=2$.}
	\label{K123}
\end{figure}

\textbf{Proof for $a\lambda=2$ and $0<\varepsilon<\frac{2}{3}$:} We show as before: If $c=(x,y)\colon J \rightarrow U$ is an integral curve of $F$ with $0<x(l)<\frac{2}{3}y(l)$ and $y(l)>0$ for some $l\in J$, then $c(t)\in K_1(c(l))$ for all $t\in J$ with $t\ge l$. Using the Poincaré-Bendixson theorem as above proves the theorem in this case.

\textbf{Proof for $a\lambda=2$ and $\frac{2}{3}<\varepsilon<1$:} Again we have: If $c=(x,y)\colon J \rightarrow U$ is an integral curve of $F$ with $0<x(l)<y(l)<\frac{3}{2}x(l)$ for some $l\in J$, then $c(t)\in K_2(x(l))$ for all $t\in J$ with $t\ge l$. Let $c=(x,y)\colon J\rightarrow U$ be a maximal integral curve of $F$ with $c(0)=(\varepsilon,1)$, $\frac{2}{3}<\varepsilon<1$, and $J=[0,t_{max})$. We show $c(t)\xrightarrow{t\to t_{max}}0$. It holds that $c(J)\subset K_2(x(0))$. Because of the Poincaré-Bendixson theorem, there exists no $\delta>0$ such that $c(J)\subset K_2(x(0))\cap \{(x,y)\in\mathbb{R}^2\text{ }|\text{ }x\ge\delta\}$. Together with the fact that $x(t)$ is strictly decreasing we get $x(t)\xrightarrow{t\to t_{max}}0$ and therefore $c(t)\xrightarrow{t\to t_{max}}0$. This completes the proof in that case.

\textbf{Proof for $a\lambda=2$ and $\varepsilon>1$:} If $c=(x,y)\colon J \rightarrow U$ is an integral curve of $F$ with $0<y(l)<x(l)$ for some $l\in J$, then $c(t)\in K_3(c(l))$ for all $t\in J$ with $t\ge l$. Now proceed as in the case $\frac{2}{3}<\varepsilon<1$.

It remains to prove the last statement of the theorem. Let $\widetilde{\textup{GL}}^+S^3$ be the topological spin structure on $S^3$. For fixed $e^{is}\in S^1$ we construct a spin-diffeomorphism $F\colon \widetilde{\textup{GL}}^+S^3\rightarrow \widetilde{\textup{GL}}^+S^3$ which restricts to the action of $e^{is}$ on $\textup{Spin}(S^3,g_t)$ defined in Section \ref{s1invspinors}, for every $t\in I$. (We use the definition of ``spin-diffeomorphism'' which is given in \cite[Section 4.1]{AWWsflow}.) Then we use the diffeomorphism invariance of the spinor flow (see \cite[Corollary 4.5. (ii)]{AWWsflow}) together with the uniqueness of the solution finish the proof.

The $S^1$-action on $S^3$ induces an $S^1$-action on $\textup{GL}^+S^3$ which lifts uniquely to a $S^1$-action on $\widetilde{\textup{GL}}^+S^3$. This can be shown as in the case of metric spin structures (see Section \ref{s1invspinors}) using topological spin structures instead. From the action of $e^{is}$ on $S^3$, $\widetilde{\textup{GL}}^+S^3$ and on $\textup{Spin}(S^3,g_t)$ (the latter is defined in Section \ref{s1invspinors}) we then get maps
\[F\colon \widetilde{\textup{GL}}^+S^3\rightarrow \widetilde{\textup{GL}}^+S^3,\]
\[F_t\colon \textup{Spin}(S^3,g_t)\rightarrow \textup{Spin}(S^3,g_t),\]
\[f\colon S^3\rightarrow S^3\]
where $F$ is a spin-diffeomorphism.
The $S^1$-actions on $\textup{GL}^+S^3$ and $\textup{SO}(S^3,g_t)$ coincide on $\textup{SO}(S^3,g_t)\subset\textup{GL}^+S^3$. Combining that with the uniqueness of the $S^1$-action on $\textup{Spin}(S^3,g_t)$ we get
\[F|_{\textup{Spin}(S^3,g_t)}=F_t\]
for every $t\in I$. Using \cite[Section 4.1]{AWWsflow} we get a map
\[F_*\colon \mathcal{N}\rightarrow\mathcal{N},\hspace{3em} \mathcal{N}_g\ni\varphi\mapsto F_*\varphi\in\mathcal{N}_{(f^{-1})^*g},\]
defined by: If locally $\varphi=[\tilde{s},\tilde{\varphi}]$, then $F_*\varphi=[F\circ\tilde{s}\circ f^{-1},\tilde{\varphi}\circ f^{-1}]$. From the definitions it follows that for every $t\in I$ and every spinor $\varphi\in\mathcal{N}_{g_t}$
\begin{align}\label{eq z1}
\left(F_*\varphi\right)(x)=\varphi(x\cdot e^{-is})\cdot e^{is}.
\end{align}
Now let $(g_t,\varphi_t)_{t\in I}$ be the solution of the spinor flow with initial value as in Theorem \ref{main result 1}. By the diffeomorphism invariance of the spinor flow, $((f^{-1})^*g_t,F_*\varphi_t)_{t\in I}$ is also a solution. We have $(f^{-1})^*g_0=(f^{-1})^*g^\varepsilon=g^\varepsilon$ and from \eqref{eq z1} we get $F_*\varphi_0=\varphi_0$. Because of the uniqueness of the solution of the spinor flow it follows that
\[F_*\varphi_t=\varphi_t\]
for every $t\in I$. Using again \eqref{eq z1} and noting that $e^{is}\in S^1$ was arbitrary, we see that $\varphi_t$ is $S^1$-invariant, i.e. $\varphi_t\in V(t)$. Define $\sigma_t\in\Gamma(\Sigma(S^2,\beta(t)^2g_{FS}))$ by $Q(\sigma_t)=\varphi_t$. Combining the second equation in Lemma \ref{lemma eq for varphi_t} with \eqref{eq s1 inv 2} and \eqref{eq s1 inv 3} yields that $\sigma_t$ is a $\frac{\lambda}{\beta(t)}$-Killing spinor. This finishes the proof of the theorem.
\end{proof}

\begin{remark}[Change of orientation convention]\label{rem change or conv} If we choose the other orientation on $S^3$ (i.e. the orientation that satisfies: If $(f_1,f_2)$ is any  oriented local orthonormal frame on $(S^2,g_{FS})$, then $(-K,f_1^*,f_2^*)$ is an oriented local orthonormal frame on $(S^3,g_{S^3})$), then Theorem \ref{main result 1} still holds if we switch the results for the cases ``$a\lambda=2$'' and ``$a\lambda=-2$''. This can be seen as follows: In Lemma \ref{lemma curv in cyl} one has to replace ``$a$'' by ``$-a$''. The additional sign enters because in the proof we used \eqref{eq spingeo cliffordm onb}. For the same reason we have to replace ``$a$'' by ``$-a$'' in \eqref{eq s1 inv 1}-\eqref{eq s1 inv 2}. Then we also need to replace ``$a$'' by ``$-a$'' in Lemma \ref{lemma eq for varphi_t} and Lemma \ref{lemma q-terms 1} and therefore also in Theorem \ref{main result 1}.
\end{remark}

\subsection{A stability theorem}
For $\varepsilon>0$ we define $c(\varepsilon)>0$ by $\textup{vol}(S^3,c(\varepsilon)g^\varepsilon)=1$, i.e. $c(\varepsilon)=(\frac{1}{2\pi^2\varepsilon})^{\frac{2}{3}}$.

Our second main result is the following theorem.
\begin{theorem}(Stability)\label{main result 2} Let $\varepsilon>0$ and $\mu\in\{\pm\frac{1}{2}\}$. Moreover, let $g_0:=c(\varepsilon)g^\varepsilon$ and $\varphi_0$ a spinor which is obtained via parallel transport of a $\mu$-Killing spinor from $(S^3,g_{S^3})$ to $(S^3,g^\varepsilon)$ such that $|\varphi_0|= 1$. (In Remark \ref{rmk main res 2} we will explain how $\varphi_0$ is defined in a more formal way.) Then the solution of the normalized spinor flow on $M=S^3$ with initial value $(g_0,\varphi_0)$ is given by \eqref{eq ansatz for g_t}-\eqref{eq ansatz for varphi_t} where $b=t_{max}$, $t_{max}\in(0,\infty]$ maximum time of existence (to the right), such that:
	\begin{itemize}
		\item[] If $a\mu=1$, then:
		\begin{itemize}
			\item[$\bullet$]For $0<\varepsilon<\frac{2}{3}$ we have $\lim\limits_{t\rightarrow t_{max}}\alpha(t)=0$ and $\lim\limits_{t\rightarrow t_{max}}\beta(t)=\infty$.
			\item[$\bullet$]For $\varepsilon =\frac{2}{3}$ we have $t_{max}=\infty$, $\alpha(t)\equiv\frac{2}{3}\sqrt{c(\frac{2}{3})}$, and $\beta(t)\equiv\sqrt{c(\frac{2}{3})}$.
			In particular, $(c(\frac23)g^\frac23,\varphi_0)$ is a critical point of the normalized spinor flow.
			\item[$\bullet$]For $\varepsilon >\frac{2}{3}$ we have $t_{max}=\infty$ and $\lim\limits_{t\rightarrow \infty}\alpha(t)=\lim\limits_{t\rightarrow \infty}\beta(t)=\sqrt{c(1)}$. Moreover, there exist smooth functions $f,g\colon I\rightarrow (0,\infty)$ with $\lim\limits_{t\rightarrow \infty}f(t)=\lim\limits_{t\rightarrow \infty}g(t)=\frac{\mu}{\sqrt{c(1)}}$ and
			\[\nabla^t_K\varphi_t=f(t)K\cdot_t\varphi_t,\]
			\[\nabla^t_Y\varphi_t=g(t)Y\cdot_t\varphi_t,\]
			for every horizontal vector field $Y\in\Gamma(TS^3)$ and every $t\in I$.
			\item[$\bullet$]For $\varepsilon =1$ we have $t_{max}=\infty$ and $\alpha(t)\equiv\beta(t)\equiv\sqrt{c(1)}$. In particular, $(c(1)g_{S^3},\varphi_0)$ is a critical point of the normalized spinor flow.
		\end{itemize}
		\item[] If $a\mu=-1$, then:
		\begin{itemize}
			\item[$\bullet$]For every $\varepsilon >0$ we have $t_{max}=\infty$ and $\lim\limits_{t\rightarrow \infty}\alpha(t)=\lim\limits_{t\rightarrow \infty}\beta(t)=\sqrt{c(1)}$. Moreover, there exist smooth functions $f,g\colon I\rightarrow (0,\infty)$ with $\lim\limits_{t\rightarrow \infty}f(t)=\lim\limits_{t\rightarrow \infty}g(t)=\frac{\mu}{\sqrt{c(1)}}$ and
			\[\nabla^t_K\varphi_t=f(t)K\cdot_t\varphi_t,\]
			\[\nabla^t_Y\varphi_t=g(t)Y\cdot_t\varphi_t,\]
			for every horizontal vector field $Y\in\Gamma(TS^3)$ and every $t\in I$.
			\item[$\bullet$]For $\varepsilon =1$ we have $t_{max}=\infty$ and $\alpha(t)\equiv\beta(t)\equiv\sqrt{c(1)}$. In particular, $(c(1)g_{S^3},\varphi_0)$ is a critical point of the normalized spinor flow.
		\end{itemize}
	\end{itemize}
	
\end{theorem}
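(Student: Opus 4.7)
\textbf{Proof plan for Theorem \ref{main result 2}.} The strategy is to mimic the proof of Theorem \ref{main result 1}: use the ansätze \eqref{eq ansatz for g_t}--\eqref{eq ansatz for varphi_t}, derive the coupled ODEs for the parameters $\alpha,\beta$ from the equation $\frac{\partial}{\partial t}g_t=\tilde Q_1(g_t,\varphi_t)$, and then perform a qualitative phase-plane analysis. The essential new feature is that $\varphi_0$ is not an $S^1$-invariant spinor coming from a Killing spinor on $S^2$, but a cylinder-parallel transport of a Killing spinor on the round $S^3$. Consequently, the expressions for $\nabla^t\varphi_t$ carry two time-dependent coefficients $f(t),g(t)$ (rather than functions of $\alpha,\beta$ determined in closed form), and the system to analyze is effectively four-dimensional.

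\textbf{Step 1: Structure of $\nabla^t\varphi_t$.} First I would show, by a differentiation-along-$\nu$ argument in the generalized cylinder that is structurally identical to the proof of Lemma \ref{lemma eq for varphi_t}, that there exist smooth functions $f,g\colon I\to\mathbb R$ with
\[
\nabla^t_K\varphi_t=f(t)K\cdot_t\varphi_t,\qquad \nabla^t_Y\varphi_t=g(t)Y\cdot_t\varphi_t
\]
for every horizontal $Y$. The key point is that the ansätze $\nabla^t_{e_0(t)}\varphi_t-\alpha(t)f(t)\,e_0(t)\cdot_t\varphi_t=0$ and $\nabla^t_{e_i(t)}\varphi_t-\beta(t)g(t)\,e_i(t)\cdot_t\varphi_t=0$ both satisfy a homogeneous linear ODE along $\nu$ with vanishing initial value, provided $f,g$ are chosen to satisfy suitable first-order ODEs whose coefficients involve $\alpha,\beta$ and the curvature formulas of Lemma \ref{lemma curv in cyl}; the initial values $f(0),g(0)$ are read off from the Killing equation on $(S^3,g_{S^3})$ after the rescaling to $c(\varepsilon)g^\varepsilon$.

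\textbf{Step 2: The normalized ODE system.} Substituting the expressions from Step 1 into the formulas for $Q_1,Q_2$ in Section \ref{sect sflow} (exactly the calculation behind Lemma \ref{lemma q-terms 1}) yields $Q_2(g_t,\varphi_t)=0$, hence $\frac{\partial}{\partial t}\varphi_t=\nabla^{\Sigma^+\mathcal Z}_\nu\varphi=0=Q_2$ for free, and gives $Q_1(g_t,\varphi_t)$ as a diagonal tensor in the frame $(e_0(t),e_1(t),e_2(t))$ with entries polynomial in $f,g,a,\alpha/\beta$. Adding the normalization term $\frac{1}{12}\mathcal E(g_t,\varphi_t)g_t$ (here $n=3$ and $\mathrm{vol}(S^3,g_t)\equiv 1$ by the calculation in Section \ref{sect sflow}) converts $\frac{\partial}{\partial t}g_t=\tilde Q_1$ into a closed system of four ODEs for $(\alpha,\beta,f,g)$ with initial conditions $(\alpha(0),\beta(0),f(0),g(0))$ determined by $\varepsilon$ and $\mu$.

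\textbf{Step 3: Qualitative analysis.} Finally I would locate the equilibria of this system and analyze convergence. The equilibria correspond exactly to the critical points appearing in the theorem: the standard sphere $\alpha=\beta=\sqrt{c(1)}$, $f=g=\mu/\sqrt{c(1)}$ (for both signs of $a\mu$), and the additional Berger equilibrium $\alpha=\tfrac23\beta$, $\beta=\sqrt{c(2/3)}$ for $a\mu=1$. As in the proof of Theorem \ref{main result 1}, I would construct invariant regions in the $(\alpha,\beta)$-plane (wedges bounded by rays of slopes $1$, $3/2$, and by the curve $\alpha=\tfrac23\beta$) tailored to the five subcases of $\varepsilon$, verify on each boundary that $F=(\alpha',\beta')$ points inward, exclude periodic orbits by a monotonicity argument on $\alpha/\beta$, and conclude by the Poincaré--Bendixson theorem that the $\omega$-limit is the appropriate equilibrium; the asymptotics of $f,g$ then follow from their ODEs, driven by the now-convergent $\alpha,\beta$. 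The remaining case $a\mu=1$, $0<\varepsilon<\tfrac23$ is handled by showing the trajectory leaves every compact subregion, forcing $\alpha\to 0$ and $\beta\to\infty$.

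\textbf{Main obstacle.} Step 3 is where the real difficulty lies. In contrast with Theorem \ref{main result 1} the normalization term destroys the scale homogeneity that made the single quantity $x/y$ a decisive monotone variable, the phase space is genuinely higher-dimensional because $f,g$ are now independent unknowns, and the presence of the intermediate equilibrium at $\varepsilon=\tfrac23$ (for $a\mu=1$) separates the basins of the two regimes in a subtle way. The delicate point is to design invariant regions that respect this separatrix and to control the coupled evolution of $(f,g)$ tightly enough that the $(\alpha,\beta)$-projection of the dynamics still admits a Poincaré--Bendixson conclusion.
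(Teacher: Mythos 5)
There is a genuine gap, and it sits exactly where you locate your ``main obstacle'': Step 3 is the actual content of the theorem and your proposal does not carry it out, while the route you sketch would not go through as stated. First, your premise that the system is ``effectively four-dimensional'' is mistaken: the cylinder argument of Step 1, done as in Lemma \ref{lemma eq for varphi_t}, produces $f$ and $g$ in \emph{closed form} as functions of $\alpha,\beta$, namely (Lemma \ref{lemma eq for varphi_t 2})
\begin{align*}
f(t)=\frac{\mu-\tfrac14 a}{\alpha(t)}+\frac14\frac{\alpha(t)}{\beta(t)^2}a,\qquad
g(t)=\frac{1}{\beta(t)}\Bigl(-\tfrac14 a\bigl(\tfrac{\alpha(t)}{\beta(t)}-1\bigr)+\mu\Bigr),
\end{align*}
with the $t=0$ values obtained from the auxiliary cylinder of Remark \ref{rmk main res 2}; so the flow equation $\partial_t g_t=\tilde Q_1$ remains a two-dimensional ODE system for $(\alpha,\beta)$, and the asymptotics of $f,g$ are automatic once $\alpha,\beta$ converge. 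Treating $f,g$ as independent unknowns both inflates the problem and invalidates your proposed tool: Poincar\'e--Bendixson is strictly a planar theorem, and ``controlling the $(\alpha,\beta)$-projection tightly enough'' is not an argument.

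Second, even in the correct two-dimensional setting you miss the decisive reduction, which you in fact already have in hand: since the normalized flow preserves $\mathrm{vol}=1$ (the computation in Section \ref{sect sflow}, which you cite) and preserves the Berger ansatz, the trajectory is confined to the one-dimensional curve of unit-volume Berger metrics $u(s)=\bigl((2\pi^2)^{-1/3}s^{2/3},(2\pi^2)^{-1/3}s^{-1/3}\bigr)$. One verifies directly that the vector field is tangent to this curve, $F(u(s))=k(s)u'(s)$ with, e.g., $k(s)=\tfrac18(2\pi^2)^{2/3}s^{5/3}(-3s^2+5s-2)$ for $a\mu=1$ and $k(s)=\tfrac18(2\pi^2)^{2/3}s^{-1/3}(-3s^4+s^2+2)$ for $a\mu=-1$, so the entire qualitative analysis collapses to the sign of a scalar polynomial with zeros at $s=\tfrac23,1$ (resp. $s=1$): no invariant wedges, no separatrix analysis, no exclusion of periodic orbits, and no Poincar\'e--Bendixson argument are needed, and the ``destroyed scale homogeneity'' you worry about is irrelevant. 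Note also that in the case $a\mu=1$, $0<\varepsilon<\tfrac23$, merely ``leaving every compact subregion'' does not force $\alpha\to0$ and $\beta\to\infty$; on the curve $u$ this follows at once from $k<0$ on $(0,\tfrac23)$. Without this (or some equally concrete) completion of Step 3, the proposal establishes only the setup, not the theorem.
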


\begin{remark}\label{rmk main res 2}\
	\begin{enumerate}
		\item In the case $a\mu=1$ we can interpret the result as follows: If we are not too far away ($\varepsilon >\frac23$) from the normalized standard metric $c(1)g_{S^3}$ together with a Killing spinor, then the metric part of the solution flows back to the normalized standard metric ($\alpha,\beta\to\sqrt{c(1)}$) and the spinor part of the solution flows back to a Killing spinor ($f,g\to\frac{\mu}{\sqrt{c(1)}}$). However, if we are too far away ($\varepsilon \le \frac23$), then the solution no longer flows back ($\alpha\to 0$, $\beta\to \infty$).
		\item If we choose $g_0=g^\varepsilon$ and $\varphi_0$ as in Theorem \ref{main result 2}, then $(g_0,\varphi_0)$ converges to a point under the unnormalized spinor flow (for $a\mu=-1$ or $a\mu=1$ and $\varepsilon\ge\frac{2}{3}$), see \cite[Theorem 6.17]{JW}. In that sense the interesting behavior is only captured in the normalized flow.
		\item In the following we make precise what we mean by $\varphi_0$ in Theorem \ref{main result 2}. To that end, let $\sigma_0\in\Gamma(\Sigma(S^3,g_{S^3}))$ be a $\mu$-Killing spinor with $|\sigma_0|=1$.
		
		\textbf{Case $\varepsilon >1$:} Define $\sigma_t$ by $\eqref{eq ansatz for varphi_t}$ where $\alpha(t)=1+t$, $\beta(t)\equiv 1$ and $b=\varepsilon -1$. Set \[\varphi_0:=\sigma_{\varepsilon-1}\in\Gamma(\Sigma(S^3,g^\varepsilon)).\]
		\textbf{Case $\varepsilon <1$:} Define $\sigma_t$ by $\eqref{eq ansatz for varphi_t}$ where $\alpha(t)=1-t$, $\beta(t)\equiv 1$, and $b=1-\varepsilon$. Set
		\[\varphi_0:=\sigma_{1-\varepsilon}\in\Gamma(\Sigma(S^3,g^\varepsilon)).\]
		\textbf{Case $\varepsilon =1$:} Simply set $\varphi_0:=\sigma_0$.
		
		Note that $\Sigma(S^3,g^\varepsilon)=\Sigma(S^3,c(\varepsilon)g^\varepsilon)$, so $\varphi_0\in\Gamma(\Sigma(S^3,c(\varepsilon)g^\varepsilon))$.
	\end{enumerate}
\end{remark}

\begin{lemma}\label{lemma eq for varphi_t 2} Choose $(g_0,\varphi_0)$ as in Theorem \ref{main result 2} and define $(g_t,\varphi_t)_{t\in I}$ by \eqref{eq ansatz for g_t}-\eqref{eq ansatz for varphi_t}. Then, for every $t\in I$ and every horizontal vector field $Y\in \Gamma(TS^3)$, it holds that
	\begin{align*}
	\nabla^t_K\varphi_t&=\left(\frac{\mu-\frac{1}{4}a}{\alpha(t)}+\frac{1}{4}\frac{\alpha(t)}{\beta(t)^2}a\right)K\cdot_t\varphi_t,\\
	\nabla^t_Y\varphi_t&=\frac{1}{\beta(t)}\left(-\frac{1}{4}a\left(\frac{\alpha(t)}{\beta(t)}-1\right)+\mu\right)Y\cdot_t\varphi_t.
	\end{align*} 
\end{lemma}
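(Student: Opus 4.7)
My plan is to adapt the cylinder-ODE argument from the proof of Lemma \ref{lemma eq for varphi_t} with the new ansatz coefficients
\[h(t):=\frac{\mu-\frac{1}{4}a}{\alpha(t)}+\frac{1}{4}\frac{\alpha(t)}{\beta(t)^2}a, \qquad k(t):=\frac{1}{\beta(t)}\left(-\frac{a}{4}\Big(\frac{\alpha(t)}{\beta(t)}-1\Big)+\mu\right),\]
and a different initial condition dictated by the parallel-transport definition of $\varphi_0$ in Remark \ref{rmk main res 2}. Concretely, I would introduce the error term
\[E(t):=\nabla^{t}_{e_0(t)}\varphi_t - h(t)\,e_0(t)\cdot_t\varphi_t,\]
viewed as a section of $\Sigma^+(\mathcal Z,g_\mathcal Z)$ via Lemma \ref{lemma cyl identi}, and show that it satisfies a linear first-order ODE along $\nu$ whose initial value vanishes.

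Using $\nabla^{\Sigma^+\mathcal Z}_\nu\varphi=0$, $\nabla^\mathcal Z_\nu e_0=0$, $[\nu,e_0]=-\frac{\alpha'(t)}{\alpha(t)}e_0$, together with Lemma \ref{lemma cyl identi} and the first identity of Lemma \ref{lemma curv in cyl}, the same commutator-plus-curvature computation as in the proof of Lemma \ref{lemma eq for varphi_t} will produce
\[\nabla^{\Sigma^+\mathcal Z}_\nu E(t)=-\frac{\alpha'(t)}{\alpha(t)}E(t).\]
The only change from the earlier derivation is that the new $h'(t)$ replaces the old derivative; I would verify directly that it combines with the curvature contribution and the Weingarten term to leave precisely the coefficient $-\alpha'/\alpha$, which is independent of the explicit form of $h$. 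By uniqueness of the linear ODE, it then suffices to verify $E(0)=0$. The horizontal identity will follow in exactly the same way, replacing the first identity of Lemma \ref{lemma curv in cyl} by the second, $h$ by $k$, and $e_0(t)$ by $\tilde Y_t:=\frac{1}{\beta(t)}Y$ as in the proof of Lemma \ref{lemma eq for varphi_t}.

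For the initial condition I would apply the \emph{same} ODE argument to the auxiliary cylinder in Remark \ref{rmk main res 2}, namely $\alpha_{\mathrm{aux}}(s)=1\pm s$, $\beta_{\mathrm{aux}}(s)\equiv 1$, along which $\sigma_s$ is defined by parallel transport starting from the $\mu$-Killing spinor $\sigma_0\in\Gamma(\Sigma(S^3,g_{S^3}))$. At $s=0$ one has $\alpha_{\mathrm{aux}}=\beta_{\mathrm{aux}}=1$ and $h_{\mathrm{aux}}(0)=(\mu-\tfrac{a}{4})+\tfrac{a}{4}=\mu$, so the Killing equation $\nabla^{g_{S^3}}_X\sigma_0=\mu X\cdot\sigma_0$ gives $E_{\mathrm{aux}}(0)=0$; propagating along the auxiliary cylinder yields the analogous formula for $\nabla^{g^\varepsilon}_K\varphi_0$ at the endpoint. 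Since $g_0=c(\varepsilon)g^\varepsilon$ is a constant conformal rescaling of $g^\varepsilon$, the spinor Levi-Civita connection is unchanged while Clifford multiplication rescales by $\sqrt{c(\varepsilon)}$; plugging in $\alpha(0)=\varepsilon\sqrt{c(\varepsilon)}$, $\beta(0)=\sqrt{c(\varepsilon)}$ and tracking the rescaling will show $E(0)=0$, closing the argument. The ODE derivation is essentially a routine repetition of the earlier one, so the main obstacle will be the bookkeeping in the initial condition, where the auxiliary parallel transport from $(S^3,g_{S^3})$ to $(S^3,g^\varepsilon)$ and the subsequent constant conformal rescaling must combine to reproduce exactly the claimed values of $h(0)$ and $k(0)$.
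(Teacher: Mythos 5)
Your proposal is correct and follows essentially the same route as the paper: propagate the Killing-type equations in $t$ via the curvature/commutator ODE in the generalized cylinder exactly as in the proof of Lemma \ref{lemma eq for varphi_t}, and obtain the initial condition at $t=0$ by running the same argument along the auxiliary cylinder of Remark \ref{rmk main res 2} starting from the Killing equation for $\sigma_0$, followed by the constant conformal rescaling to $c(\varepsilon)g^\varepsilon$. One small caveat: the homogeneous equation $\nabla^{\Sigma^+(\mathcal{Z},g_\mathcal{Z})}_\nu E=-\frac{\alpha'(t)}{\alpha(t)}E$ is not independent of the form of $h$ — it holds precisely because $(\alpha h)'=\frac{a}{4}\bigl(\frac{\alpha^2}{\beta^2}\bigr)'$ and $(\beta k)'=-\frac{a}{4}\bigl(\frac{\alpha}{\beta}\bigr)'$, which your $h$ and $k$ satisfy, so the direct verification you promise does go through.
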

\begin{proof} First, these equations hold for $t=0$. (This can be shown as follows: Let $\sigma_t$ be as in Remark \ref{rmk main res 2}. We can deduce equations for $\nabla^t\sigma_t$ similar to the proof of Lemma \ref{lemma eq for varphi_t}. Evaluating the equations for $\nabla^t\sigma_t$ at $t=\varepsilon -1$ and $t=1-\varepsilon$, respectively, yields the equations of Lemma \ref{lemma eq for varphi_t 2} for $t=0$, see also \cite[Lemma 6.21]{JW}.) Using the same method as in the proof of Lemma \ref{lemma eq for varphi_t} yields the equations for all $t\in I$.
\end{proof}

Using Lemma \ref{lemma eq for varphi_t 2} one easily proves the following lemma.
\begin{lemma}\label{lemma q-terms 2}Let $(g_t,\varphi_t)_{t\in I}$ as in Lemma \ref{lemma eq for varphi_t 2}. For every $t\in I$ it holds, that
	\begin{align*}
	Q_1(g_t,\varphi_t)(e_0(t),e_0(t))&=\frac{1}{4}\frac{1}{\alpha(t)^2}\left(\mu-\frac{1}{4}a\right)^2+\frac{1}{\beta(t)^2}\left(-\frac{1}{2}\mu^2-\frac{3}{8}a\mu\right)\\
	&\hspace{2em}+\frac{\alpha(t)}{\beta(t)^3}\left(\frac{1}{8}a^2+\frac{1}{2}a\mu\right)-\frac{9}{64}\frac{\alpha(t)^2}{\beta(t)^4}a^2,\\
	Q_1(g_t,\varphi_t)(e_1(t),e_1(t))&=-\frac{1}{4}\frac{1}{\alpha(t)^2}\left(\mu-\frac{1}{4}a\right)^2+\frac{\alpha(t)}{\beta(t)^3}\left(-\frac{1}{32}a^2-\frac{1}{8}a\mu\right)+\frac{3}{64}\frac{\alpha(t)^2}{\beta(t)^4}a^2,\\
	Q_1(g_t,\varphi_t)(e_2(t),e_2(t))&=Q_1(g_t,\varphi_t)(e_1(t),e_1(t)),\\
	Q_1(g_t,\varphi_t)(e_i(t),e_j(t))&=0 \text{ for }i\neq j,\\
	Q_2(g_t,\varphi_t)&=0,\\
	\frac16 \frac{1}{\textup{vol}(S^3,g_t)}\mathcal{E}(g_t,\varphi_t)&=\frac{1}{12}(\mu-\frac{1}{4}a)^2\frac{1}{\alpha(t)^2}+ \frac{1}{64}a^2\frac{\alpha(t)^2}{\beta(t)^4}+\frac{1}{24}a(\mu-\frac{1}{4}a)\frac{1}{\beta(t)^2}\\
	&\hphantom{=}+\frac{1}{6}(\frac{1}{4}a+\mu)^2\frac{1}{\beta(t)^2}-\frac{1}{12}a(\frac{1}{4}a+\mu)\frac{\alpha(t)}{\beta(t)^3}.
	\end{align*}
\end{lemma}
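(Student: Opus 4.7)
The plan is to substitute the expressions from Lemma \ref{lemma eq for varphi_t 2} into the formulas for $Q_1$ and $Q_2$ from Section \ref{sect sflow}, then simplify using Clifford algebra identities together with the Christoffel symbols of Lemma \ref{lemma z1}. First I would rewrite Lemma \ref{lemma eq for varphi_t 2} in the unit frame $(e_0(t),e_1(t),e_2(t))$: since $K=\alpha(t)e_0(t)$ and any horizontal lift $f_i^*$ equals $\beta(t)e_i(t)$, one gets
\[
\nabla^t_{e_0(t)}\varphi_t = A\,e_0(t)\cdot_t\varphi_t,\qquad \nabla^t_{e_i(t)}\varphi_t = B\,e_i(t)\cdot_t\varphi_t,\quad i=1,2,
\]
with $A=\frac{\mu-a/4}{\alpha(t)}+\frac{a\alpha(t)}{4\beta(t)^2}$ and $B=\frac{1}{\beta(t)}\bigl(\mu-\frac{a}{4}(\frac{\alpha(t)}{\beta(t)}-1)\bigr)$. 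Using $|X\cdot_t\varphi_t|^2=g_t(X,X)$ one directly reads off $|\nabla^t\varphi_t|^2=A^2+2B^2$, and pairings like $(e_0\cdot\varphi_t,e_i\cdot\varphi_t)=g_t(e_0,e_i)=0$ force the symmetric tensor $\nabla^t\varphi_t\otimes\nabla^t\varphi_t$ to be diagonal with entries $A^2,B^2,B^2$, explaining the vanishing off-diagonal entries claimed for $Q_1$.

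Next I would expand $Q_2(g_t,\varphi_t)=-(\nabla^t)^*\nabla^t\varphi_t+|\nabla^t\varphi_t|^2\varphi_t$ by writing the connection Laplacian as $-\sum_i(\nabla^t_{e_i}\nabla^t_{e_i}\varphi_t-\nabla^t_{\nabla^t_{e_i}e_i}\varphi_t)$, applying the Leibniz rule for Clifford multiplication to the iterated derivatives, and substituting the frame derivatives from Lemma \ref{lemma z1}. The cross terms involving the Christoffel coefficients $\tfrac12\tfrac{\alpha}{\beta^2}a$ cancel in pairs (because the Clifford products $e_0\cdot e_i\cdot e_0\cdot\varphi_t$ and $e_i\cdot e_0\cdot e_i\cdot\varphi_t$ carry opposite signs from the Clifford relation \eqref{eq spingeo cliffordm}), and the surviving contribution $-(A^2+2B^2)e_i\cdot e_i\cdot\varphi_t=(A^2+2B^2)\varphi_t$ precisely cancels the $|\nabla^t\varphi_t|^2\varphi_t$ term, mirroring the mechanism of Lemma \ref{lemma q-terms 1}.

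The real work sits in the divergence term $-\tfrac14\text{div}_{g_t}T_{g_t,\varphi_t}$ inside $Q_1$. Here I would expand $T_{g,\varphi}(X,Y,Z)$ as the $(Y,Z)$-symmetrization of $(X\cdot Y\cdot\varphi+g(X,Y)\varphi,\nabla_Z\varphi)$, evaluate on frame triples using the anti-symmetry \eqref{eq spingeo cliffordm scp} and the Clifford relation to reduce triple products to scalars, and then differentiate using Lemma \ref{lemma z1}. Because $A$ and $B$ depend only on $t$ (hence have vanishing derivatives along $e_i(t)$), only the connection-induced pieces survive; tracking these carefully yields the mixed terms $\tfrac{\alpha}{\beta^3}$ and $\tfrac{\alpha^2}{\beta^4}$ recorded in the statement, while the pure $\alpha^{-2}$ and $\beta^{-2}$ contributions arise directly from $A^2$ and $B^2$ inside $\tfrac12(\nabla^t\varphi_t\otimes\nabla^t\varphi_t)-\tfrac14|\nabla^t\varphi_t|^2 g_t$. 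This bookkeeping is the main obstacle, as several terms must be collected from different sources and shown to combine into the stated coefficients; it is the step most prone to sign errors.

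For the final identity, since $A$ and $B$ are spatially constant, $\mathcal{E}(g_t,\varphi_t)=\tfrac12(A^2+2B^2)\,\textup{vol}(S^3,g_t)$, so $\tfrac{1}{6\,\textup{vol}(S^3,g_t)}\mathcal{E}(g_t,\varphi_t)=\tfrac{1}{12}(A^2+2B^2)$. Expanding $A^2$ and $B^2$ as polynomials in $\alpha^{-1}$, $\beta^{-1}$, $\alpha\beta^{-2}$ and collecting the terms with $p:=\tfrac{a}{4}+\mu$ and $q:=\mu-\tfrac{a}{4}$ produces exactly the five monomials listed; this last step is a purely algebraic verification requiring no further geometric input.
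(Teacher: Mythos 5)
Your proposal follows essentially the same route as the paper, whose proof of this lemma is exactly the direct computation you describe: substitute the derivative formulas of Lemma \ref{lemma eq for varphi_t 2} into the expressions for $Q_1$, $Q_2$ and $\mathcal{E}$ from Section \ref{sect sflow} and evaluate in the frame $(e_0(t),e_1(t),e_2(t))$ via Lemma \ref{lemma z1}; your constants $A$, $B$, the identity $|\nabla^t\varphi_t|^2=A^2+2B^2$, the diagonality of $\nabla^t\varphi_t\otimes\nabla^t\varphi_t$, and the final energy identity $\tfrac{1}{12}(A^2+2B^2)$ all check out. One small correction to your anticipated mechanism for $Q_2=0$: the rough Laplacian only involves $\nabla^t_{e_i(t)}e_i(t)$, and by Lemma \ref{lemma z1} these contain no $\tfrac12\tfrac{\alpha}{\beta^2}a$-terms (indeed $\nabla^t_{e_0(t)}e_0(t)=0$ and $\nabla^t_{e_i(t)}e_i(t)$ is horizontal), so the cancellation is between the Leibniz term $B\,(\nabla^t_{e_i(t)}e_i(t))\cdot_t\varphi_t$ and $-\nabla^t_{\nabla^t_{e_i(t)}e_i(t)}\varphi_t$ rather than between Clifford cross terms carrying opposite signs — the conclusion $Q_2=0$ is unaffected, and note also that the vanishing of the off-diagonal entries of $Q_1$ additionally requires checking that $\operatorname{div}_{g_t}T_{g_t,\varphi_t}$ is diagonal, which your planned frame computation of $T$ would supply.
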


\begin{proof}[Proof of Theorem \ref{main result 2}]
	From Lemma \ref{lemma q-terms 2} we get that $\frac{\partial}{\partial t}g_t=\tilde{Q}_1(g_t,\varphi_t)$ with $g_0=c(\varepsilon)g^\varepsilon$ is equivalent to the following systems of two non-linear ordinary differential equations for $(\alpha,\beta)$ with initial values	$\alpha(0)=\sqrt{c(\varepsilon)}\varepsilon$, $\beta(0)=\sqrt{c(\varepsilon)}$:
	For $a \mu=1$:
	\begin{align*}
	\alpha'(t)&=-\frac{1}{4}\frac{\alpha(t)^3}{\beta(t)^4}+\frac{5}{12}\frac{\alpha(t)^2}{\beta(t)^3}-\frac{1}{6}\frac{\alpha(t)}{\beta(t)^2},\\
	\beta'(t)&=\frac{1}{8}\frac{\alpha(t)^2}{\beta(t)^3}-\frac{5}{24}\frac{\alpha(t)}{\beta(t)^2}+\frac{1}{12}\frac{1}{\beta(t)},\\
	\end{align*}
	and for $a\mu=-1$:
	\begin{align*}
	\alpha'(t)&=-\frac{1}{4}\frac{\alpha(t)^3}{\beta(t)^4}+\frac{1}{12}\frac{\alpha(t)}{\beta(t)^2}+\frac{1}{6}\frac{1}{\alpha(t)},\\
	\beta'(t)&=\frac{1}{8}\frac{\alpha(t)^2}{\beta(t)^3}-\frac{1}{12}\frac{\beta(t)}{\alpha(t)^2}-\frac{1}{24}\frac{1}{\beta(t)}.
	\end{align*}
Denote by $F=F(x,y)$ the vector field associated to these differential equations as in the proof of Theorem \ref{main result 1}. First, we show that it suffices to restrict $F$ to a certain 1-dimensional submanifold of $\mathbb{R}^2$. To that end, define
\[u\colon (0,\infty)\rightarrow \mathbb{R}^2,\hspace{3em} t\mapsto \left( \sqrt{c(t)}t,\sqrt{c(t)} \right)=\left((2\pi^2)^{-\frac{1}{3}}t^{\frac{2}{3}},(2\pi^2)^{-\frac{1}{3}}t^{-\frac{1}{3}}\right).\]
The image of $u$ is an embedded submanifold of $\mathbb{R}^2$ and precisely the set of the initial values we are interested in. Noting that
\[F(u(t))=k(t)u'(t),\]
with
\[
k(t)=\left\{\begin{array}{ll} \frac{1}{8}(2\pi^2)^{\frac{2}{3}}t^{\frac{5}{3}}(-3t^2+5t-2), & \text{for }a\mu=1, \\
\frac{1}{8}(2\pi^2)^{\frac{2}{3}}t^{-\frac{1}{3}}(-3t^4+t^2+2), & \text{for }a\mu=-1,\end{array}\right. 
\] 	
we see that $F$ is tangent to the image of $u$. So we only have to understand the integral curves of $F|_{\textup{Im}(u)}$. One way to do this is to look at the corresponding vector field on $(0,\infty)$ which is given by $d(u^{-1})(V\circ u)$. Since $d(u^{-1})(V\circ u)=k(t)$ the claims about $\alpha$, $\beta$ and $t_{max}$ of Theorem \ref{main result 2} follow easily. The claimed convergence of the functions $f$ and $g$ follows from Lemma \ref{lemma eq for varphi_t 2} and the shown convergence of $\alpha$ and $\beta$. This finishes the proof of the theorem.	
\end{proof}

\newpage
\bibliographystyle{abbrv}
\bibliography{Literatur}

\begin{thebibliography}{10}

\bibitem{ABCollaps}
B.~Ammann and C.~B{\"a}r.
\newblock The {D}irac operator on nilmanifolds and collapsing circle bundles.
\newblock {\em Ann. Global Anal. Geom.}, 16(3):221--253, 1998.

\bibitem{AWWsflow}
B.~Ammann, H.~Weiß, and F.~Witt.
\newblock A spinorial energy functional: critical points and gradient flow.
\newblock arXiv:1207.3529.

\bibitem{AWWsflowsurf}
B.~Ammann, H.~Weiß, and F.~Witt.
\newblock The spinorial energy functional on surfaces.
\newblock arXiv:1407.2590, to appear in \textit{Math. Z.}

\bibitem{BaerGaudMoroi}
C.~B{\"a}r, P.~Gauduchon, and A.~Moroianu.
\newblock Generalized cylinders in semi-{R}iemannian and {S}pin geometry.
\newblock {\em Math. Z.}, 249(3):545--580, 2005.

\bibitem{BourGaud}
J.-P. Bourguignon and P.~Gauduchon.
\newblock Spineurs, op\'{e}rateurs de dirac et variations de m\'{e}triques.
\newblock {\em Comm. Math. Phys.}, 144(3):581--599, 1992.

\bibitem{CodLevODE}
E.~A. Coddington and N.~Levinson.
\newblock {\em Theory of ordinary differential equations}.
\newblock McGraw-Hill Book Company, Inc., 1987.

\bibitem{Fri}
T.~Friedrich.
\newblock {\em {D}irac {O}perators in {R}iemannian {G}eometry}.
\newblock Graduate Studies in Mathematics. AMS, 2000.

\bibitem{Hij}
O.~Hijazi.
\newblock Spectral properties of the dirac operator and geometrical structures.
\newblock Proceedings of the Summer School on Geometric Methods in Quantum
  Field Theory, Villa de Leyva, Colombia, July 12-30, (1999), World Scientific
  2001.

\bibitem{Huisken}
G.~Huisken.
\newblock Flow by mean curvature of convex surfaces into spheres.
\newblock {\em J. Diff. Geom.}, 20:237--266, 1984.

\bibitem{LaMi}
H.~B. Lawson and M.-L. Michelsohn.
\newblock {\em Spin Geometry}.
\newblock Princeton University Press, 1989.

\bibitem{Mor}
A.~Moroianu.
\newblock Op\'{e}rateur de {D}irac et submersions riemanniennes.
\newblock Thèse de doctorat, École Polytechnique, 1996.

\bibitem{Roe}
J.~Roe.
\newblock {\em Elliptic operators, topology and asymptotic methods}.
\newblock Longman, 1998.

\bibitem{JW}
J.~Wittmann.
\newblock Der {S}pinorfluss auf dreidimensionalen {B}erger-{S}phären, 2015.
\newblock URL: http://epub.uni-regensburg.de/32442/.

\end{thebibliography}

\end{document}